\documentclass[12pt]{amsart}
\usepackage{amsmath, amsfonts, latexsym, array, amssymb}

\newtheorem{thm}{Theorem}[section]
\newtheorem{cor}[thm]{Corollary}
\newtheorem{lem}[thm]{Lemma}

\newtheorem{cla}[thm]{Claim}
\newtheorem{que}[thm]{Question}

\newtheorem{ex}[thm]{Example}

\begin{document}

\title[Trivial centralizer] {Open sets of diffeomorphisms with trivial centralizer in the $C^1$ topology}
\author{Lennard Bakker and Todd Fisher}
\address{Department of Mathematics, Brigham Young University, Provo, UT 84602}

\email{tfisher@math.byu.edu}
\email{bakker@math.byu.edu}

\subjclass[2000]{11R11, 11R16, 11R27, 37C05, 37C20, 37C25, 37D05, 37D20}
\date{May, 2014}
\keywords{Anosov, hyperbolic, centralizer, rigidity}
\commby{}

\begin{abstract} On the torus of dimension $2$, $3$, or $4$, we show that the subset of diffeomorphisms with trivial centralizer in the $C^1$ topology has nonempty interior. We do this by developing two approaches, the fixed point and the odd prime periodic point, to obtain trivial centralizer for an open neighbourhood of Anosov diffeomorphisms arbitrarily near certain irreducible hyperbolic toral automorphism.
 \end{abstract}

\maketitle

\section{Introduction}

The trivial centralizer problem, first posed in 1967 by Smale, asks about the topology of the subset of diffeomorphisms that commute only with their integer powers. For $M$ a compact, smooth manifold, and for $r\in{\mathbb N}\cup\{\infty\}$, the centralizer of $f\in\mathrm{Diff}^r(M)$ is the group
\[ Z(f)=\{g\in \mathrm{Diff}^r(M): fg=gf\}.\] 
We know that $Z(f)$ always contains the finite or countable subgroup $\langle f \rangle=\{ f^k:k\in{\mathbb Z} \}$.  We say that $f$ has  trivial centralizer if $Z(f)=\langle f\rangle$.  We denote the subset of $\mathrm{Diff}^r(M)$ with trivial centralizer by $\mathcal{T}^r(M)$. Smale's original question \cite{Sma1} about the trivial centralizer problem asked if ${\mathcal T}^r(M)$ is generic or residual in the $C^r$ topology, i.e., is it the countable intersection of open, dense subsets of ${\rm Diff}^r(M)$? By the turn of the century, the original question had expanded into a hierarchy of three questions about the possible size of ${\mathcal T}^r(M)$.  

\begin{que}\label{q.smale} $($Smale \cite{Sma2}$)$
\begin{itemize}
\item[(1)] Is $\mathcal{T}^r(M)$ dense in $\mathrm{Diff}^r(M)$? 
\item[(2)] Is $\mathcal{T}^r(M)$ residual in $\mathrm{Diff}^r(M)$? 
\item[(3)] Is $\mathcal{T}^r(M)$ open and dense in $\mathrm{Diff}^r(M)$?
\end{itemize}
\end{que}

Partial results to all of the three parts of Question~\ref{q.smale}  have been obtained over the years for various classes of diffeomorphisms. Kopell~\cite{Kop1} showed that $\mathcal{T}^r(S^1)$ is open and dense in $\mathrm{Diff}^r(S^1)$ for all $r\geq 2$. For higher dimensional manifolds and $r\geq 2$ there are answers under additional dynamical assumptions.  For instance, for certain Axiom A diffeomorphisms there have been several results obtained \cite{PY1, PY2, Fis2, Fis3}. For certain $C^{\infty}$ diffeomorphisms that are partially hyperbolic, Burslem obtained a residual set with trivial centralizer \cite{Bur}. In the $C^1$ setting Togawa \cite{Tog} proved that the generic Axiom A diffeomorphism has trivial centralizer.

In 2009, Bonatti, Crovisier, and Wilkinson~\cite{BCW1} proved that for any manifold $M$ the set $\mathcal{T}^1(M)$ is generic in $\mathrm{Diff}^1(M)$. Thus the second part (and hence the first part) of Question \ref{q.smale} has been answered in the affirmative for any $M$ when $r=1$. About the same time, Bonatti, Crovisier, Vago, and Wilkinson~\cite{BCVW} prove that for any $M$ there exists an open set $\mathcal{U}$ of $\mathrm{Diff}^1(M)$ and a dense set $\mathcal{D}$ of $\mathcal{U}$ such that each $f$ in $\mathcal{D}$ has uncountable, hence nontrivial, centralizer. Thus, the third part of Question \ref{q.smale} has been answered in the negative for any $M$ when $r=1$. These results point to another question about the topology of ${\mathcal T}^1(M)$, one that is already suggested in \cite{BCW1}.

\begin{que}\label{empty}
For what compact manifolds $M$ does $\mathcal{T}^1(M)$ have non-empty interior?
\end{que}

Among low dimensional manifolds, an answer to Question \ref{empty} for one particular manifold is already known. The aforementioned Bonatti, Crovisier, Vago, and Wilkinson  \cite{BCVW} showed that  $\mathcal{T}^1(S^1)$ has empty interior. The main objective of this paper is to show that for  low dimensional tori ${\mathbb T}^n$ we can answer yes to Question \ref{empty}.

\begin{thm}\label{t.open}
The set $\mathcal{T}^1(\mathbb{T}^n)$ has nonempty interior for $2\leq n\leq 4$.
\end{thm}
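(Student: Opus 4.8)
To prove Theorem~\ref{t.open} it suffices to produce, for each $n\in\{2,3,4\}$, one irreducible hyperbolic toral automorphism $A$ of $\mathbb{T}^n$ together with a nonempty $C^1$-open set $\mathcal{U}\subset\mathrm{Diff}^1(\mathbb{T}^n)$ — which moreover can be chosen inside any prescribed $C^1$-neighborhood of $A$ — such that $Z(f)=\langle f\rangle$ for every $f\in\mathcal{U}$. Structural stability of Anosov diffeomorphisms is the backbone: after shrinking $\mathcal{U}$ (taken inside a $C^1$-ball about $A$, or about a small Anosov perturbation of $A$) each $f\in\mathcal{U}$ is Anosov, is topologically conjugate to $A$ by a homeomorphism $\phi_f$ homotopic to the identity, and satisfies $f_\ast=A$ on $H_1(\mathbb{T}^n)=\mathbb{Z}^n$. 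Now take $g\in Z(f)$. Then $g_\ast\in GL(n,\mathbb{Z})$ commutes with $A$, so $g_\ast$ lies in the matrix centralizer $Z_{GL(n,\mathbb{Z})}(A)$. The first key point is to choose $A$ so that this centralizer is as small as possible, $Z_{GL(n,\mathbb{Z})}(A)=\langle A,-I\rangle$; then $g_\ast=\pm A^k$ for some $k\in\mathbb{Z}$, and replacing $g$ by $f^{-k}g\in Z(f)$ reduces the problem to the case $g_\ast=\pm I$.

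\textbf{Choice of $A$ and the role of the dimension.} Take $A$ to be the companion matrix of a fundamental unit of an order in a number field $K$ of degree $n$, chosen so that $K$ has no roots of unity other than $\pm1$ and so that $\mathbb{Z}[A]$ has unit group exactly $\{\pm A^k\}$; the second requirement forces the unit rank of $K$ to be $1$. By Dirichlet's unit theorem the rank is $r_1+r_2-1$, and since hyperbolicity of $A$ (no eigenvalue on the unit circle, as $\det A=\pm1$) excludes the contribution of a purely imaginary quadratic place, the constraint $r_1+2r_2=n$ with $r_1+r_2=2$ leaves only the signatures $(2,0)$, $(1,1)$, $(0,2)$, i.e. $n=2,3,4$. (For $n\ge5$ the unit rank is at least $2$ and $Z_{GL(n,\mathbb{Z})}(A)$ is necessarily too large; this is exactly the obstruction to pushing the method higher.) One then arranges, in each of the three admissible cases, one of two supplementary arithmetic properties: either (\emph{fixed point}) $\det(A-I)=\pm1$, so that $A$ — hence every $f\in\mathcal{U}$ — has a unique fixed point $p_0$; or (\emph{odd prime periodic point}) there is an odd prime $q$ for which the number of period-$q$ orbits of $A$, namely $\bigl(|\det(A^q-I)|-|\det(A-I)|\bigr)/q$, is odd. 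Establishing that explicit matrices with these properties (together with the minimality of $Z_{GL(n,\mathbb{Z})}(A)$) exist in all three dimensions is the number-theoretic heart of the construction.

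\textbf{Disposing of the two reduced cases.} Suppose first $g_\ast=I$. Then $\phi_fg\phi_f^{-1}$ is a homeomorphism of $\mathbb{T}^n$ commuting with $A$ and homotopic to the identity; lifting and using hyperbolicity of $A$ (a bounded $\mathbb{Z}^n$-periodic $u$ with $u\circ A=A\circ u$ must vanish), such a homeomorphism is a translation by a point of $\mathrm{Fix}(A)$. In the fixed-point case $\mathrm{Fix}(A)$ is trivial, so $g=\mathrm{id}$ and the original $g$ equals $f^k$; in the odd-prime case one pins the translation down on periodic orbits to the same conclusion. Suppose now $g_\ast=-I$. Then $(g^2)_\ast=I$, so by the previous paragraph $g^2=\mathrm{id}$: $g$ is a smooth involution, and conjugating by $\phi_f$ identifies it, inside $Z_{GL(n,\mathbb{Z})}(A)=\langle A,-I\rangle$, with a linear involution $x\mapsto-x+c$ of $\mathbb{T}^n$; in particular $g$ is topologically conjugate to a linear involution. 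The two approaches now anchor $g$ at a periodic point. In the fixed-point case, $g$ fixes the unique fixed point $p_0$ of $f$. In the odd-prime case, $g$ — being of order $2$ — permutes the odd number of period-$q$ orbits of $f$, hence fixes some such orbit $\mathcal{O}$ setwise; since $q$ is an odd prime the centralizer of the $q$-cycle $f|_{\mathcal{O}}$ in the symmetric group is cyclic of order $q$, and a square-trivial element of it is trivial, so $g$ fixes every point of $\mathcal{O}$. With $g$ anchored at a fixed or periodic point $p$, $g$ preserves the dense immersed unstable manifold $W^u(p,f)$, on which it acts as a smooth diffeomorphism commuting with a power of $f$ and, via $\phi_f$, conjugate to the restriction of the linear symmetry; the perturbation defining $\mathcal{U}$ is chosen so that the linearizing conjugacy of $f$ along $W^u(p,f)$ is non-smooth in a way incompatible with the existence of any such smooth symmetry, and so that this incompatibility is $C^1$-robust. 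Hence $g_\ast\ne-I$, so $g=f^k$ and $Z(f)=\langle f\rangle$.

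\textbf{Main obstacle.} The hard part is the $g_\ast=-I$ step — and, when $\det(A-I)\ne\pm1$, the parallel translation issue in the $g_\ast=I$ step: one must rule out, in a $C^1$-open way, the possibility that a $\phi_f$-conjugate of a linear symmetry of $\mathbb{T}^n$ is $C^1$. This is a rigidity statement, made delicate by the fact that $\phi_f$ is only Hölder continuous and by the need for an \emph{open} rather than merely generic conclusion; choosing the perturbation so that the unstable linearization is robustly non-smooth, and verifying that no smooth symmetry can survive, is where the real work lies. Alongside this sits the purely arithmetic task of exhibiting, for each of $n=2,3,4$, a matrix $A$ with $Z_{GL(n,\mathbb{Z})}(A)=\langle A,-I\rangle$ and the required fixed-point or odd-prime-period property. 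Everything else — structural stability, the homology reduction, and the hyperbolicity argument settling the $g_\ast=I$ case when $\det(A-I)=\pm1$ — is routine.
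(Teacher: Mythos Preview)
Your scaffolding matches the paper's: structural stability, Adler--Palais rigidity giving $h_fgh_f^{-1}=B+c$ with $B\in C(A)$ and $c\in\mathrm{Per}^1(A)$, the Dirichlet unit-rank constraint forcing $n\in\{2,3,4\}$, and the reduction (after composing with $f^{-k}$) to $B=\pm I$. The gap is in how you dispose of $B=-I$ (and of $B=I$, $c\ne0$, when $\mathrm{Fix}(A)$ is nontrivial). You propose to anchor $g$ at a fixed or periodic point and then invoke a $C^1$-robust ``non-smoothness of the unstable linearization'' to forbid any smooth symmetry there; you explicitly concede this is the hard step and do not carry it out. In the $C^1$ category, with an \emph{open} rather than generic conclusion required, it is not at all clear this can be made to work, and in any case it is not what the paper does.

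The paper's mechanism is different and far more elementary. Inside $\mathcal{U}(A)$ one passes to the $C^1$-open set $\mathcal{U}_k(A)$ on which distinct $k$-periodic orbits of $f$ carry distinct spectra for $Df^k$; if $g\in Z(f)$ sends a $k$-periodic point $\theta$ to a point on a \emph{different} $k$-orbit, then differentiating $f^kg=gf^k$ makes $D_{g(\theta)}f^k$ and $D_\theta f^k$ similar---contradiction. Everything thus reduces to a pigeonhole count forcing $-I+c$ (respectively $I+c$ with $c\ne0$) to move some $k$-periodic point off its $A$-orbit. For $k=1$ the condition is $|\mathrm{Per}^1(A)|>|\mathrm{Per}^1(-I)|=2^n$; for $k=p$ an odd prime it is $|\mathrm{Per}^p(A)\setminus\mathrm{Per}^1(A)|>2^n$, together with the observation (Lemma~\ref{involution2}) that an involution commuting with $f$ cannot send a non-fixed $p$-periodic point to another point of the \emph{same} $f$-orbit when $p$ is odd. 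Note that the arithmetic requirements are the \emph{opposite} of yours: the paper wants many fixed points (more than $2^n$), not a unique one, and many non-fixed $p$-periodic points, not an odd number of $q$-orbits. Your conditions are tailored to pinning $g$ at a point and appealing to a local smoothness obstruction; the paper's are tailored to guaranteeing $g$ displaces a periodic point to a foreign orbit and appealing to the open spectral condition. This completely bypasses the rigidity obstacle you isolate as the main difficulty.
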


The first step in the proof of this Main Theorem, given in Section 2, is to identify the centralizer of a hyperbolic toral automorphism $A$. We show that each element of $Z(A)$ is an affine diffeomorphism of the form $B+c$ where $B$ is a toral automorphism that commutes with $A$, and $c$ is a fixed point of $A$. The group structure of $Z(A)$ depends on the centralizer $C(A)$ of a toral automorphism $A$ within the group of toral automorphisms. Using algebraic number theory and assuming that $A$ is irreducible, we identify the group structure of $C(A)$. We determine   conditions on $A$ under which we guarantee that
\[ C(A) = \langle A \rangle \times \langle J\rangle,\]
where $J$ is toral automorphism of finite even order that commutes with $A$. The point to emphasize here is that this group structure for $C(A)$ can only happen when $n=2$, $3$, or $4$, because by algebraic number theory, when $n\geq 5$, there is always another toral automorphism $B$ of infinite order that commutes with $A$ but is not an integer power of $A$. However, no matter the value of $n$, the even finite order cyclic subgroup $\langle J\rangle$ of $C(A)$ always contains the toral automorphism $-I$ of order $2$, where $I$ is the $n\times n$ identity matrix. We describe the group properties of $\langle J\rangle$, especially the relationship between $-I$ and $J$ within $\langle J\rangle$, as these play a key role.

The second step in the proof of the Main Theorem, given in Section 3, is to use the well known structural stability of an Anosov diffeomorphsim to get a $C^1$ open neighbourhood ${\mathcal U}(A)$ of Anosov diffeomorphisms containing a hyperbolic toral automorphism $A$. For each $f\in {\mathcal U}(A)$ there is a conjugating homeomorphism $h_f$ that satisfies $h_f f h_f^{-1} = A$. Under the inner automorphism induced by $h_f$,  the centralizer $Z(f)$ is isomorphic to a subgroup of $Z(A)$. So when $C(A) = \langle A\rangle \times \langle J\rangle$, then each $g\in Z(f)$ satisfies
\[ h_f g h_f^{-1} = A^mJ^l+c\]
for $m\in{\mathbb Z}$, $l$ a nonnegative integer no bigger than the order of $J$, and $c$ a fixed point of $A$. If we can show that $l=0$ and $c=0$, then we have trivial centralizer for $f$. To achieve this, we look within ${\mathcal U}(A)$ at the open neighbourhoods ${\mathcal U}_k(A)$ in which distinct $k$-periodic orbits have different spectra. These neighbourhoods serve as the candidates for the open sets of diffeomorphisms with trivial centralizers.

We develop two approaches for achieving trivial centralizer for an open neighbourhood of Anosov diffeomorphisms near a hyperbolic toral automorphism $A$. One approach, detailed in Section 4, is through the fixed points of the elements of ${\mathcal U}_1(A)$. The other, detailed in Section 5, is through the periodic points of period $p$ of the elements of ${\mathcal U}_p(A)$ for an odd prime $p$. Both approaches require that $C(A) = \langle A\rangle \times \langle J\rangle$ and that the order of $J$ be a power of two. When these holds, we prove that when the number of fixed points of $A$ is bigger than $2^n$, then every $f\in{\mathcal U}_1(A)$ has trivial centralizer, or when the number of periodic points of $A$ of period an odd prime $p$ minus the number of fixed points of $A$ is bigger than $2^n$, then every $f\in{\mathcal U}_p(A)$ has trivial centralizer.

To complete the proof of the Main Theorem we exhibit in Section 6 the existence of an irreducible hyperbolic toral automorphism in each of $n=2$, $3$, and $4$, to which we can apply either the fixed point approach or the odd prime periodic point approach. For simplicity, each such example is constructed as a companion matrix of a monic irreducible polynomial whose constant term is $\pm 1$. However, for any toral automorphism conjugate to one of the given three examples, the needed conditions will be satisfied to obtain a nearby open neighbourhood of Anosov diffeomorphisms with trivial centralizer.

We conclude the paper in Section 7 will several examples illustrating the versatility and limitations of the two approaches. We demonstrate the  algebraic techniques used to determine if we can apply either approach to a given irreducible hyperbolic toral automorphism. Not all of the examples start with a companion matrix for an irreducible polynomial. We also give examples where we cannot apply the fixed point approach nor the odd prime periodic point approach because $C(A)\ne \langle A\rangle\times\langle J\rangle$ or the order of $J$ is not a power of two.

\subsection{Open questions}

Combining the nonempty interior of ${\mathcal T}^1({\mathbb T}^n)$ for $n=2,3,4$ with the empty interior of ${\mathcal T}^1(S^1)$ naturally leads to the following modification of Question \ref{empty}.

\begin{que} For what compact manifolds $M$ with $\mathrm{dim}(M)\geq 2$, does $\mathcal{T}^1(M)$ have nonempty interior?
\end{que}

For a compact manifold $M$ with $\mathrm{dim}(M)\leq 2$ the proof in~\cite{BCVW} uses Morse-Smale diffeomorphisms to produce an open set of diffeomorphisms containing a dense set with nontrivial centralizers.  A natural question is then what happens if a $C^1$ diffeomorphism has a nontrivial homoclinic class.

\begin{que} Let $M$ be a compact surface and $\mathcal{O}$ be the open set of $C^1$ Axiom A diffeomorphisms of $M$ with no-cycles that contain a nontrivial homoclinic class. Is there an open and dense set in $\mathcal{O}$ with trivial centralizer?
\end{que}

In higher dimensions the proof of nontrivial centralizers in~\cite{BCVW} uses what are called wild diffeomorphisms.  These can be characterized as $C^1$ generic diffeomorphisms with an infinite number of chain recurrent classes.  If there are a finite number of chain recurrent classes in higher dimension, then it may be possible to produce an open set with a trivial centralizer.

\begin{que} Let $M$ be a compact manifold with $\mathrm{dim}(M)\geq 3$.  Let $\mathcal{O}$ be the open set of $C^1$ Axiom A diffeomorphisms of $M$ with no-cycles.  Is there an open and dense set in $\mathcal{O}$ with trivial centralizer?
\end{que}

\section{Centralizers of Irreducible HTAs}

We identify the diffeomorphisms that commute with a hyperbolic toral automorphism. A classical rigidity result implies that any homeomorphism that commutes with an ergodic toral automorphism is affine (see Theorem \ref{c.matrix} below). An automorphism of ${\mathbb T}^n$ is  induced by a ${\rm GL}(n,{\mathbb Z})$ matrix $A$, i.e., an $n\times n$ integer matrix with determinant $\pm 1$. By abuse of notation, we will denote the induced toral automorphism by $A$ as well. A toral automorphism $A$ is ergodic with respect to the Lebesgue measure $\mu$ on ${\mathbb T}^n$ induced from ${\mathbb R}^n$ when $\mu(D)=0$ or $\mu({\mathbb T}^n\setminus D)=0$ for every measurable $A$-invariant $D\subset{\mathbb T}^n$. We denote an affine map of ${\mathbb T}^n$ by $B+c$ for $B\in {\rm GL}(n,{\mathbb Z})$ and a constant $c\in{\mathbb T}^n$, where $(B+c)(\theta) = B\theta +c$ for $\theta\in{\mathbb T}^n$. The centralizer of $A\in {\rm GL}(n,{\mathbb Z})$ within ${\rm GL}(n,{\mathbb Z})$ is the group
\[ C(A) = \{ B\in {\rm GL}(n,{\mathbb Z}): BA=AB\}.\]
We let ${\rm Per}^1(A)$ denote the fixed points of $A$. 

\vspace{0.1in}
\begin{thm}\label{c.matrix} $($Adler and Palais \cite{AP}$)$
Let $A\in{\rm GL}(n,{\mathbb Z})$ be ergodic with respect to $\mu$. If $h$ is a homeomorphism of $\mathbb{T}^n$ such that $hA =Ah$, then there exists $B\in C(A)$ and $c\in {\mathrm Per}^1(A)$ such that $h=B+c$.
\end{thm}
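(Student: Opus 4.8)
The plan is to pass to the universal cover $\mathbb{R}^n$, lift $h$, and show the lift is affine; the point is that commuting with an \emph{ergodic} automorphism forces the nonlinear part of the lift to be constant. First I would choose a lift $\tilde h\colon\mathbb{R}^n\to\mathbb{R}^n$ of $h$. Standard covering-space theory gives an integer matrix $B$ with $\tilde h(x+v)=\tilde h(x)+Bv$ for every $v\in\mathbb{Z}^n$; this $B$ is the action of $h$ on $H_1(\mathbb{T}^n;\mathbb{Z})\cong\mathbb{Z}^n$, and since $h$ is a homeomorphism $B\in{\rm GL}(n,\mathbb{Z})$. Since the induced action of $A$ on $H_1$ is the matrix $A$ itself, functoriality applied to $hA=Ah$ gives $BA=AB$, i.e. $B\in C(A)$. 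Setting $u(x):=\tilde h(x)-Bx$, one checks $u$ is $\mathbb{Z}^n$-periodic, so it descends to a continuous map $u\colon\mathbb{T}^n\to\mathbb{R}^n$; the whole theorem reduces to showing $u$ is constant.

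Next I would convert the commutation relation into a functional equation for $u$. Since $\tilde h\circ A$ and $A\circ\tilde h$ are both lifts of $hA=Ah$, they differ by a constant $k\in\mathbb{Z}^n$, so $\tilde h(Ax)=A\tilde h(x)+k$; substituting $\tilde h=B(\cdot)+u$ and using $AB=BA$ yields, as maps $\mathbb{T}^n\to\mathbb{R}^n$,
\[ u\circ A = A\circ u + k. \]
Ergodicity now enters through the classical fact that an ergodic toral automorphism has no root-of-unity eigenvalue; in particular $1$ is not an eigenvalue of $A$, so $I-A$ is invertible and I may put $c_0:=(I-A)^{-1}k\in\mathbb{Q}^n$.

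The heart of the argument is to show $u\equiv c_0$. I would expand each coordinate of $u\in L^2(\mathbb{T}^n;\mathbb{C}^n)$ in a Fourier series $u(x)=\sum_{\xi\in\mathbb{Z}^n}\hat u(\xi)\,e^{2\pi i\langle\xi,x\rangle}$. Matching coefficients in the functional equation gives $\hat u(0)=c_0$ and the recursion $\hat u\bigl((A^{T})^{-1}\xi\bigr)=A\,\hat u(\xi)$ for $\xi\neq 0$, hence $\hat u\bigl((A^{T})^{-m}\xi\bigr)=A^{m}\hat u(\xi)$ for all $m\in\mathbb{Z}$. Since no eigenvalue of $A^{T}$ is a root of unity, the orbit $\{(A^{T})^{-m}\xi:m\in\mathbb{Z}\}$ of any $\xi\neq 0$ consists of infinitely many distinct lattice points, so Parseval gives $\sum_{m\in\mathbb{Z}}\|A^{m}\hat u(\xi)\|^2\leq\|u\|_{L^2}^2<\infty$. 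But a nonzero vector $v$ with $\|A^{m}v\|\to 0$ as $m\to+\infty$ must lie in the sum of generalized eigenspaces of $A$ with eigenvalue of modulus $<1$, while $\|A^{m}v\|\to 0$ as $m\to-\infty$ forces $v$ into the sum of those of modulus $>1$; these meet only in $0$, so $\hat u(\xi)=0$. Thus $u\equiv c_0$, whence $\tilde h(x)=Bx+c_0$ and $h=B+c$ on $\mathbb{T}^n$ with $c=c_0\bmod\mathbb{Z}^n$; finally $(I-A)c_0=k\in\mathbb{Z}^n$ gives $Ac=c$ in $\mathbb{T}^n$, i.e. $c\in{\rm Per}^1(A)$, and $B\in C(A)$ from the first step.

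I expect the main obstacle to be precisely this last step: extracting genuine rigidity from the bare hypothesis that $h$ commutes with an \emph{ergodic} (not necessarily Anosov) automorphism. The computation hinges on the interplay of two facts — ergodicity is exactly the absence of root-of-unity eigenvalues, which makes every nonzero frequency orbit infinite, and no nonzero vector lies in the strictly stable subspaces of both $A$ and $A^{-1}$. A more conceptual variant replaces the Fourier estimate by the fact that an ergodic toral automorphism is weakly mixing, hence its Koopman operator has no nonconstant $L^2$-eigenfunctions; pairing $u-c_0$ against (generalized) eigenvectors of $A$ then yields such an eigenfunction unless $u-c_0\equiv 0$. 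The Fourier route is cleaner since it handles possible Jordan blocks and a nontrivial ``central'' subspace of $A$ without extra bookkeeping.
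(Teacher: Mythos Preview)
The paper does not supply its own proof of this theorem: it is quoted as a result of Adler and Palais \cite{AP} and used as a black box throughout. So there is no in-paper proof to compare against.

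That said, your argument is correct and is essentially the classical proof. The lift-and-Fourier strategy you describe is exactly how this rigidity is usually established (and is close in spirit to the original Adler--Palais argument): pass to the cover, extract the linear part $B$ from the action on $H_1$, reduce to a cohomological equation $u\circ A = A\,u + k$ for the periodic remainder, and kill the nonzero Fourier modes. Your handling of the key step is sound: for each $\xi\neq 0$ the convergence of $\sum_{m\in\mathbb{Z}}\|A^m\hat u(\xi)\|^2$ forces $\|A^m\hat u(\xi)\|\to 0$ as $m\to\pm\infty$, placing $\hat u(\xi)$ in both the stable and unstable subspaces and hence in $\{0\}$; this indeed works even when $A$ has eigenvalues on the unit circle, since a nonzero central component would keep $\|A^m\hat u(\xi)\|$ bounded away from zero. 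One small remark: the characterization ``ergodic $\Leftrightarrow$ no root-of-unity eigenvalue'' is exactly what you need both to make $I-A$ invertible over $\mathbb{Q}$ and to make every nonzero dual orbit infinite, so you are invoking ergodicity in the right two places.
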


This rigidity gives an identification of $Z(A)$ for a hyperbolic toral automorphism $A$. An $A\in{\rm GL}(n,{\mathbb Z})$ having no eigenvalues of modulus one is hyperbolic. Since hyperbolicity implies ergodicity for $A$ (see \cite{Rob}), we can apply Theorem \ref{c.matrix} to show that any homeomorphism that commutes with $A$ is of the form $B+c$ for $B\in C(A)$ and $c\in{\rm Per}^1(A)$. On the other hand, for $B\in C(A)$ and $c\in{\rm Per}^1(A)$, it is straight-forward to show that $A$ and $B+c$ commute, i.e., that $B+c\in Z(A)$. Thus for a hyperbolic $A$ we have
\[ Z(A) = \{ B+c : B \in C(A), c\in {\rm Per}^1(A)\}.\]
The group $C(A)$ is countable because ${\rm GL}(n,{\mathbb Z})$ is countable. When $A$ is hyperbolic, the group ${\rm Per}^1(A)$ is finite, and so $Z(A)$ is countable as well. Since $-I\in C(A)$ we have that $-I\in Z(A)$ where $-I$ is not an integer power of a hyperbolic $A$. Thus, every hyperbolic toral automorphism has a nontrivial but countable centralizer.

The group structure of $C(A)$ is known when $A\in {\rm GL}(n,{\mathbb Z})$ is simple, i.e., $A$ has no repeated roots (see \cite{BR}). In particular, $C(A)$ is abelian when $A$ is simple. We describe this group structure when $A$ is irreducible, i.e., when its characteristic polynomial $p_A(x)$ is irreducible, which implies that $A$ is simple.  For $\lambda$ a root of the $p_A(x)$, the algebraic number field
\[ {\mathbb F}={\mathbb Q}(\lambda) = \{ r_1 + r_2\lambda + r_3\lambda^2+\cdot\cdot\cdot+ r_n\lambda^{n-1}:r_i\in{\mathbb Q} \} \]
has degree $n$ over ${\mathbb Q}$. The signature of $p_A(x)$ is the pair $(r_1,r_2)$ where $r_1$ is the number of real roots of $p_A(x)$, and $2r_2$ is the number of complex roots of $p_A(x)$. The elements of ${\mathbb F}$ that are roots of monic polynomials with integer coefficients form the ring of integers ${\mathfrak o}_{\mathbb F}$ in ${\mathbb F}$. Let ${\mathfrak o}_{\mathbb F}^\times$ denote the group of units in ${\mathfrak o}_{\mathbb F}$, i.e., those elements of ${\mathfrak o}_{\mathbb F}$ whose multiplicative inverses are also in ${\mathfrak o}_{\mathbb F}$. By Dirichlet's Unit Theorem (see \cite{Coh,Swi}), the group ${\mathfrak o}_{\mathbb F}^{\times}$ is isomorphic to the product of ${\mathbb Z}^{r_1+r_2-1}$ with a finite cyclic group generated by a root of unity $\zeta_{\mathbb F}$ of even order. The following group structure result for $C(A)$, with $A$ irreducible, is from Baake and Roberts \cite{BR} (see also \cite{KKS}). 

\begin{thm}\label{C(A)structure} If $A\in {\rm GL}(n,{\mathbb Z})$ is irreducible, and ${\mathbb F} = {\mathbb Q}(\lambda)$ for a root $\lambda$ of $p_A(x)$, then $C(A)$ is isomorphic to a subgroup of finite index of ${\mathfrak o}_{\mathbb F}^\times$.
\end{thm}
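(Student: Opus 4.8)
The plan is to show that $C(A)$ is isomorphic to the unit group of an order in ${\mathbb F}$ sandwiched between ${\mathbb Z}[\lambda]$ and ${\mathfrak o}_{\mathbb F}$, and then to invoke the standard fact that such a unit group has finite index in ${\mathfrak o}_{\mathbb F}^\times$. The first ingredient is linear-algebraic: since $A$ is irreducible, its characteristic polynomial $p_A(x)$ equals its minimal polynomial, so $A$ is non-derogatory and the centralizer of $A$ inside $M_n({\mathbb Q})$ is exactly the commutative ring ${\mathbb Q}[A]$ of polynomials in $A$. Because $p_A$ is irreducible of degree $n$, the evaluation $A\mapsto\lambda$ gives a field isomorphism $\psi\colon{\mathbb Q}[A]\to{\mathbb F}$; in particular every $B\in C(A)$ lies in ${\mathbb Q}[A]$.

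Next I would isolate the relevant order. Set ${\mathfrak O}={\mathbb Q}[A]\cap M_n({\mathbb Z})$, a subring of $M_n({\mathbb Z})$ containing ${\mathbb Z}[A]$; as a ${\mathbb Z}$-submodule of the free module $M_n({\mathbb Z})$ it is finitely generated, and it has rank $n$ because it lies between ${\mathbb Z}[A]$ (of rank $n$) and the $n$-dimensional ${\mathbb Q}$-vector space ${\mathbb Q}[A]$. Hence $\psi({\mathfrak O})$ is an order with ${\mathbb Z}[\lambda]\subseteq\psi({\mathfrak O})\subseteq{\mathfrak o}_{\mathbb F}$ and $[{\mathfrak o}_{\mathbb F}:\psi({\mathfrak O})]\leq[{\mathfrak o}_{\mathbb F}:{\mathbb Z}[\lambda]]<\infty$. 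I then claim $C(A)={\mathfrak O}^\times$: if $B\in C(A)$, then $B\in{\mathbb Q}[A]\cap M_n({\mathbb Z})={\mathfrak O}$, while $B^{-1}$ also commutes with $A$ and so lies in ${\mathbb Q}[A]$, and $\det B=\pm1$ together with the adjugate formula puts $B^{-1}$ in $M_n({\mathbb Z})$, whence $B^{-1}\in{\mathfrak O}$; conversely a unit of ${\mathfrak O}$ has $B$ and $B^{-1}$ both integral, forcing $\det B=\pm1$, so $B\in{\rm GL}(n,{\mathbb Z})$ commutes with $A$. Transporting through $\psi$ yields $C(A)\cong\psi({\mathfrak O})^\times$.

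To finish I would use the number-theoretic fact that the unit group of an order ${\mathcal O}'$ in ${\mathbb F}$ of finite index in ${\mathfrak o}_{\mathbb F}$ has finite index in ${\mathfrak o}_{\mathbb F}^\times$: pick $m\in{\mathbb N}$ with $m\,{\mathfrak o}_{\mathbb F}\subseteq{\mathcal O}'$, note that the reduction map ${\mathfrak o}_{\mathbb F}^\times\to({\mathfrak o}_{\mathbb F}/m{\mathfrak o}_{\mathbb F})^\times$ has finite image, and observe that any unit $u$ in its kernel satisfies $u,u^{-1}\in{\mathbb Z}+m{\mathfrak o}_{\mathbb F}\subseteq{\mathcal O}'$, so the finite-index kernel is contained in $({\mathcal O}')^\times$; then apply this with ${\mathcal O}'=\psi({\mathfrak O})$. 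The step needing the most care is the middle one: $C(A)$ must not be conflated with the unit group of ${\mathbb Z}[A]$, since the order ${\mathfrak O}={\mathbb Q}[A]\cap M_n({\mathbb Z})$ --- the one genuinely cut out by integrality of the matrix entries --- can be strictly larger than ${\mathbb Z}[A]$. By contrast, the linear-algebra statement about non-derogatory matrices and the finite index of unit groups of orders (in effect Dirichlet's Unit Theorem applied to both ${\mathcal O}'$ and ${\mathfrak o}_{\mathbb F}$, which share the rank $r_1+r_2-1$) are standard.
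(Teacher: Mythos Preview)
Your proof is correct and follows essentially the same route as the paper's outline: identify the rational centralizer of $A$ with ${\mathbb Q}[A]\cong{\mathbb F}$ via $A\mapsto\lambda$, observe that $C(A)={\mathbb Q}[A]\cap{\rm GL}(n,{\mathbb Z})$ maps into ${\mathfrak o}_{\mathbb F}^\times$, and sandwich its image above ${\mathbb Z}[\lambda]^\times$ to conclude finite index. Your version is slightly sharper in that you pin down $C(A)$ exactly as the unit group of the specific order ${\mathfrak O}={\mathbb Q}[A]\cap M_n({\mathbb Z})$ (rather than merely sandwiching $\gamma(C(A))$ between ${\mathbb Z}[\lambda]^\times$ and ${\mathfrak o}_{\mathbb F}^\times$ as the paper does), and you supply the explicit reduction-mod-$m$ argument for the finite index of an order's unit group instead of citing it; but the underlying strategy is the same.
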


We may obtain a complete identification of $C(A)$ for an irreducible $A\in {\rm GL}(n,{\mathbb Z})$ by an examination of the proof of Theorem \ref{C(A)structure}. Any matrix with rational entries that commutes with $A$ belongs to the ring
\[ {\mathbb Q}[A]=\{r_1I+r_2A+r_3A^2+\cdot\cdot\cdot+r_n A^{n-1}:r_i\in{\mathbb Q} \}\]
(see \cite{Jac}). For a root $\lambda$ of $p_A(x)$, there is a ring isomorphism $\gamma$ from ${\mathbb Q}[A]$ to ${\mathbb F}={\mathbb Q}(\lambda)$ given by
\[ \gamma:v(A)\to v(\lambda)\]
for $v$ in the polynomial ring ${\mathbb Q}[x]$. If $B\in {\mathbb Q}[A]$ has integer entries, then $\gamma(B)\in{\mathfrak o}_{\mathbb F}$, and if $B\in{\mathbb Q}[A]\cap {\rm GL}(n,{\mathbb Z})$, then $\gamma(B)\in{\mathfrak o}_{\mathbb F}^\times$. (The converse of each of these is false; see \cite{Bak1} or Example \ref{noninteger} in this paper for a counterexample.) Since $C(A) = {\mathbb Q}[A]\cap {\rm GL}(n,{\mathbb Z})$, we have that $\gamma( C(A) )\subset {\mathfrak o}_{\mathbb F}^\times$. On the other hand, the ring
\[ {\mathbb Z}[A] = \{ m_1 I+m_2A+m_3A^2+\cdot\cdot\cdot+m_nA^{n-1}:m_i\in {\mathbb Z}\}\]
is a subring of  ${\mathbb Q}[A]$, and the image of ${\mathbb Z}[A]$ under $\gamma$ is the ring
\[ {\mathbb Z}[\lambda] = \{ m_1 + m_2\lambda+m_3\lambda^3+\cdot\cdot\cdot + m_n\lambda^{n-1}: m_i\in{\mathbb Z}\}.\]
If $B\in {\mathbb Z}[A]\cap {\rm GL}(n,{\mathbb Z})$, then $\gamma(B)\in {\mathbb Z}[\lambda]^\times$, the group of units in ${\mathbb Z}[\lambda]$. Since ${\mathbb Z}[A]\cap {\rm GL}(n,{\mathbb Z})\subset C(A)$, we get ${\mathbb Z}[\lambda]^\times\subset \gamma(C(A))\subset {\mathfrak o}_{\mathbb F}^\times$. Since ${\mathbb Z}[\lambda]^\times$ is a finite index subgroup of ${\mathfrak o}_{\mathbb F}^\times$, we obtain  that $\gamma(C(A))$ is a finite index subgroup of ${\mathfrak o}_{\mathbb F}^\times$. A simple squeeze play on ${\mathbb Z}[\lambda]^\times$ and ${\mathfrak o}_{\mathbb F}^\times$ gives the proof of the following result that completely classifies $C(A)$ for certain irreducible toral automorphisms $A$.

\begin{thm}\label{IdentificationC(A)} Suppose $A\in {\rm GL}(n,{\mathbb Z})$ is irreducible. Let $\lambda$ be a root of $p_A(x)$ and ${\mathbb F}={\mathbb Q}(\lambda)$. If ${\mathbb Z}[\lambda]^\times = {\mathfrak o}_{\mathbb F}^\times$, then $\gamma(C(A)) = {\mathfrak o}_{\mathbb F}^\times$.
\end{thm}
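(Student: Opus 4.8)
The plan is to carry out the ``squeeze play'' that the paragraph preceding the statement already sets up. The excerpt supplies both ingredients we need: first, that $C(A) = {\mathbb Q}[A]\cap{\rm GL}(n,{\mathbb Z})$ (because $A$ is irreducible, hence nonderogatory, every rational matrix commuting with $A$ lies in ${\mathbb Q}[A]$); and second, that the ring isomorphism $\gamma\colon{\mathbb Q}[A]\to{\mathbb F}$ restricts to a ring isomorphism ${\mathbb Z}[A]\to{\mathbb Z}[\lambda]$, yielding the chain of inclusions
\[ {\mathbb Z}[\lambda]^\times \;\subseteq\; \gamma(C(A)) \;\subseteq\; {\mathfrak o}_{\mathbb F}^\times. \]
Granting this chain, the theorem is immediate: under the hypothesis ${\mathbb Z}[\lambda]^\times = {\mathfrak o}_{\mathbb F}^\times$ the two outer terms coincide, so the middle term is squeezed onto both, giving $\gamma(C(A)) = {\mathfrak o}_{\mathbb F}^\times$.

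So the only thing genuinely to verify is that the left-hand inclusion holds as an honest statement about the restriction of $\gamma$, not merely up to the image containments recorded in the excerpt. I would argue as follows. Given $u\in{\mathbb Z}[\lambda]^\times$, set $B=\gamma^{-1}(u)$; since $\gamma$ carries ${\mathbb Z}[A]$ onto ${\mathbb Z}[\lambda]$ we have $B\in{\mathbb Z}[A]$, and because $\gamma$ is a ring isomorphism, $\gamma^{-1}(u^{-1}) = B^{-1}\in{\mathbb Z}[A]$ as well. Thus $B$ is an integer matrix whose inverse is also an integer matrix, i.e.\ $B\in{\mathbb Z}[A]\cap{\rm GL}(n,{\mathbb Z})\subseteq C(A)$, and therefore $u=\gamma(B)\in\gamma(C(A))$. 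The right-hand inclusion is the reverse bookkeeping, already noted in the excerpt: any $B\in C(A)$ is an integer matrix lying in ${\mathbb Q}[A]$ with integer inverse, so $\gamma(B)$ and $\gamma(B)^{-1}=\gamma(B^{-1})$ both lie in ${\mathfrak o}_{\mathbb F}$, whence $\gamma(B)\in{\mathfrak o}_{\mathbb F}^\times$.

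There is no real obstacle here; the statement is simply the extremal case (index $1$) of Theorem \ref{C(A)structure}. The one point that deserves a moment of care is the equivalence ``$B\in{\rm GL}(n,{\mathbb Z})$'' $\iff$ ``$B$ and $B^{-1}$ both have integer entries,'' since this is exactly what lets the unit condition pass cleanly back and forth across $\gamma$ in both directions. Beyond this, no further number theory enters: in particular Dirichlet's Unit Theorem, used in the general case only to guarantee that the index $[{\mathfrak o}_{\mathbb F}^\times:{\mathbb Z}[\lambda]^\times]$ is finite, plays no role once that index is assumed to be $1$.
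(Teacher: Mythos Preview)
Your proposal is correct and is precisely the argument the paper has in mind: the paper does not even write out a separate proof, instead remarking that ``a simple squeeze play on ${\mathbb Z}[\lambda]^\times$ and ${\mathfrak o}_{\mathbb F}^\times$ gives the proof,'' referring to the chain ${\mathbb Z}[\lambda]^\times\subset\gamma(C(A))\subset{\mathfrak o}_{\mathbb F}^\times$ established in the preceding paragraph. Your write-up simply makes this squeeze explicit and adds the verification that $\gamma$ carries ${\mathbb Z}[A]\cap{\rm GL}(n,{\mathbb Z})$ onto ${\mathbb Z}[\lambda]^\times$, which is exactly the detail the paper elides.
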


The condition ${\mathbb Z}[\lambda]^\times = {\mathfrak o}_{\mathbb F}^\times$ of Theorem \ref{IdentificationC(A)} holds for those irreducible toral automorphisms $A$ for which ${\mathbb Z}[\lambda] = {\mathfrak o}_{\mathbb F}$. Not every irreducible $A$ has ${\mathbb Z}[\lambda]={\mathfrak o}_{\mathbb F}$, for $\lambda$ a root of $p_A(x)$ and ${\mathbb F} = {\mathbb Q}(\lambda)$, but such $A$ do exist (see Section 6). It can also happen that ${\mathbb Z}[\lambda]^\times = {\mathfrak o}_{\mathbb F}^\times$ while ${\mathbb Z}[\lambda] \ne {\mathfrak o}_{\mathbb F}$ (see Example \ref{not} in this paper). But as long as $\gamma(C(A)) = {\mathfrak o}_{\mathbb F}^\times$, there is $J\in C(A)$ such that $\gamma(J) = \zeta_{\mathbb F}$. The element $-I$ that always belongs to $C(A)$ satisfies $\gamma(-I) = -1$. Because $-I$ has order two, it follows that $-I\in\langle J\rangle$, so that the order of $J$ is always even. Some basic properties of the finite cyclic group $\langle J\rangle$ are listed in the following results, where we use basic cyclic group theory \cite{Hun} without explicit reference.

\begin{cla}\label{even} Suppose $J\in{\rm GL}(n,{\mathbb Z})$ has order $2k$ for some $k\in{\mathbb N}$. If $-I\in\langle J\rangle$, then $J^k=-I$.
\end{cla}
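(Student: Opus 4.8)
The plan is to exploit the structure of the finite cyclic group $\langle J\rangle$, which has order $2k$ by hypothesis. The crucial fact from elementary cyclic group theory is that a cyclic group of even order contains exactly one element of order two; I will identify that element as $J^k$ from one side and as $-I$ from the other, and conclude they coincide.

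First I would note that since $J$ has order $2k$, the power $J^k$ has order $2k/\gcd(k,2k)=2$, so $J^k$ is an element of order two in $\langle J\rangle$. Next, because $-I\in\langle J\rangle$ by assumption and $-I$ has order exactly two (as $n\geq 1$ forces $-I\neq I$), the matrix $-I$ is also an element of order two in $\langle J\rangle$. Finally, the cyclic group $\langle J\rangle$ of order $2k$ has a unique subgroup of order two, hence a unique element of order two; therefore $J^k=-I$.

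This is essentially a one-line appeal to the classification of subgroups of a finite cyclic group, so I do not anticipate any real obstacle. The only points deserving a moment's care are the verifications that $J^k$ and $-I$ each have order exactly two rather than one; these reduce to the trivial observations that $\gcd(k,2k)=k$ (so $(J^k)^2=J^{2k}=I$ while $J^k\neq I$ since $k<2k$) and that $-I$ is not the identity matrix whenever $n\geq 1$.
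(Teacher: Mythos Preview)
Your proof is correct. Both your argument and the paper's reach the conclusion via elementary cyclic group theory, but the routes differ slightly. The paper writes $-I=J^l$ for some $0<l<2k$, squares to get $J^{2l}=I$, deduces $2k\mid 2l$ so that $l=mk$, and then rules out even $m$ by a parity contradiction, leaving $l\equiv k\pmod{2k}$ and hence $J^k=-I$. Your argument is more structural: you observe that $J^k$ has order exactly two, that $-I$ has order exactly two, and that a cyclic group of order $2k$ contains a unique element of order two, forcing $J^k=-I$. Your version is shorter and avoids the case analysis on the parity of $m$; the paper's version has the minor advantage of being entirely self-contained, not invoking the (admittedly standard) uniqueness-of-subgroups fact for cyclic groups.
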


\begin{proof} With $-I\in\langle J\rangle$ and $\langle J\rangle$ cyclic of order $2k$, there is $l\in{\mathbb Z}$ with $0<l<2k$ such that $J^l = -I$. Then $J^{2l} = (-I)^2 = I$. Hence $2k \mid 2l$. Thus there is $m\in{\mathbb Z}$ such that $2l = 2mk$, or $l=mk$. Suppose $m=2s$ for some $s\in{\mathbb Z}$. Then $l=2sk$, and so
\[ -I = J^l = J^{2sk} = (J^{2k})^s = I^s = I,\]
a contradiction. So $m=2s+1$ for some $s\in{\mathbb Z}$. Then
\[ J^l = J^{(2s+1)k} = J^{2sk} J^k = (J^{2k})^sJ^k = I^sJ^k = J^k.\]
Since $J^l=-I$, we obtain $J^k = -I$.
\end{proof}

\begin{lem}\label{poweroftwo} Suppose that $J\in{\rm GL}(n,{\mathbb Z})$ has even finite order and that $-I\in\langle J\rangle$. If the order of $J$ is $2^b$ for some $b\in{\mathbb N}$, then for each $0<l< 2^b$, there exists $t\in{\mathbb N}$ such that $(J^l)^t = -I$. If the order of $J$ is $2^dk$ for $d\in{\mathbb N}$ and $k>2$ an odd integer, then for $l=2^d$ there is no $t\in{\mathbb N}$ for which $(J^l)^t=-I$.
\end{lem}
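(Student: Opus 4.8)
The plan is to deduce both assertions from elementary facts about the finite cyclic group $\langle J\rangle$: the order-of-a-power formula and the uniqueness of subgroups of each order in a finite cyclic group. First I would record the key preliminary: since $-I\ne I$ in ${\rm GL}(n,{\mathbb Z})$, the element $-I$ has order exactly $2$, so $\{I,-I\}$ is \emph{the} unique subgroup of order two of $\langle J\rangle$; consequently, a subgroup $H\leq\langle J\rangle$ contains $-I$ if and only if $|H|$ is even. (One could also route everything through Claim~\ref{even}, which already gives $J^{2^{b-1}}=-I$ in the first case, but the subgroup formulation handles both parts uniformly.)

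For the first assertion, assume $J$ has order $2^b$ and fix $l$ with $0<l<2^b$. Write $l=2^am$ with $m$ odd; because $0<l<2^b$, the $2$-adic valuation satisfies $a\leq b-1$. The order of $J^l$ is $2^b/\gcd(2^b,l)=2^{b-a}$, which is at least $2$, so $\langle J^l\rangle$ is a nontrivial subgroup of the cyclic $2$-group $\langle J\rangle$ and hence has even order. By the preliminary it contains $-I$; explicitly one may take $t=2^{\,b-a-1}\in{\mathbb N}$, so that $(J^l)^{t}$ is the unique element of order $2$ in $\langle J^l\rangle$, namely $-I$.

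For the second assertion, assume $J$ has order $2^dk$ with $d\geq 1$ and $k>2$ odd, and take $l=2^d$. Then the order of $J^l$ is $2^dk/\gcd(2^dk,2^d)=k$, an odd number. Since a cyclic group of odd order has no element of order $2$ while $-I$ has order exactly $2$, we get $-I\notin\langle J^l\rangle$, and therefore there is no $t\in{\mathbb N}$ with $(J^l)^t=-I$.

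I do not expect a genuine obstacle here. The only points needing a moment's attention are the bookkeeping that $l<2^b$ forces $a=v_2(l)\leq b-1$ (so that $\langle J^l\rangle$ is nontrivial and thus of even order), and the trivial but necessary remark that $-I$ has order precisely $2$, so that membership of $-I$ in a subgroup is equivalent to that subgroup having even order. Everything else is the standard cyclic-group theory already invoked without comment in Claim~\ref{even}.
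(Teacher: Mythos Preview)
Your argument is correct. Both parts follow cleanly from the order--of--a--power formula $|J^l|=|J|/\gcd(|J|,l)$ together with the observation that $-I$ is the unique element of order $2$ in the cyclic group $\langle J\rangle$, so that $-I\in\langle J^l\rangle$ if and only if $|J^l|$ is even. The explicit value $t=2^{\,b-a-1}$ (where $2^a\|l$) that you record is exactly the one that works.

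The paper's proof reaches the same conclusion by a more computational route. Using Claim~\ref{even} to identify $-I=J^{2^{b-1}}$, it reduces the first assertion to solving the congruence $lt\equiv 2^{b-1}\pmod{2^b}$, and then finds $t$ by an iterated case split on the parity of $l$, $l/2$, $l/4$, \dots, effectively extracting the $2$--adic valuation of $l$ one factor at a time; the second assertion is handled by assuming $(J^{2^d})^t=-I$, writing $2^dt=2^{d-1}k+m\cdot 2^dk$, and deriving the parity contradiction $2(t-mk)=k$. Your subgroup--theoretic approach is shorter and treats both parts uniformly, at the cost of invoking (rather than re-deriving) the standard fact that a finite cyclic group has a unique subgroup of each order dividing its order. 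The paper's approach is more self-contained and explicitly constructive at every step, which may be why the authors chose it; either way, both arguments yield the same explicit witness $t=2^{\,b-a-1}$ in the first case.
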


\begin{proof} Suppose that the order of $J$ is $2^b$ for some $b\in {\mathbb N}$. By Claim \ref{even}, we have that $J^{2^{b-1}}=J^{2^b/2}=-I$. For $t\in{\mathbb N}$ we have $J^{lt}=-I=J^{2^{b-1}}$ if and only if $lt \equiv 2^{b-1} {\rm\ mod\ }2^b$.

The integer $l$ between $0$ and $2^b$ is odd or even. Suppose $l=2m+1$ for some $m\in{\mathbb N}$. Then $t=2^{b-1}$ satisfies 
$$lt = (2m+1)2^{b-1} = 2^{b-1} + m2^b,$$ 
so that $lt\equiv 2^{b-1} {\rm mod\ }2^b$. Hence $J^{lt} = -I$ when $l$ is odd and $t=2^{b-1}$.

Suppose that $l=2r_1$ for some $r_1\in{\mathbb N}$. Since $l<2^b$, we have that $r_1<2^{b-1}$. If $r_1=2m+1$ for some $m\in{\mathbb N}$, then $t=2^{b-2}$ satisfies 
$$lt = 2(2m+1)2^{b-2} = 2^{b-1}+m2^b,$$ 
so that $J^{lt}=-I$. Otherwise, $r_1=2r_2$ with $r_2\in{\mathbb N}$ and $r_2< 2^{b-2}$ because $r_1<2^{b-1}$. If $r_2=2m+1$ for some $m\in{\mathbb N}$, then $t=2^{b-3}$ satisfies 
$$lt = 4(1+2m)2^{b-3} = 2^{b-1}+m2^b,$$ 
so that $J^{lt}=-I$. Continuing this  process gives $t=2^{b-u}$ for some $u\in{\mathbb N}$ with $u\leq b$ for which $lt = 2^{b-1}+m2^b$, and hence $J^{lt} = -I$.

Now suppose that the order of $J$ is $2^dk$ for some $d\in{\mathbb N}$ and $k>2$ an odd integer. For $l=2^d$, suppose there is $t\in{\mathbb N}$ such that $(J^l)^t=-I$. Since $J^{2^{d-1}k}=-I$ by Claim \ref{even}, then $lt \equiv 2^{d-1}k {\rm\ mod\ }2^dk$. So we have that $2^dt = 2^{d-1}k + m2^dk$ for some $m\in{\mathbb Z}$. This implies that $2(t-mk)=k$. This contradicts the oddness of $k$. So there is no integer power of $J^l$ which equals $-I$. 
 \end{proof}
 
The irreducible toral automorphisms $A$ of interest here are those for which the abelian $C(A)$ has one infinite cyclic factor. For $\lambda$ a root of $p_A(x)$ and ${\mathbb F}={\mathbb Q}(\lambda)$, we know by Theorem \ref{C(A)structure} that $\gamma(C(A))$ is a finite index subgroup of ${\mathfrak o}_{\mathbb F}^\times$. The condition of $C(A)$ of having rank one is then the same as ${\mathfrak o}_{\mathbb F}^\times$ having rank one. The latter has rank one when there exists a fundamental unit $\epsilon_{\mathbb F}$ such that
\[ {\mathfrak o}_{\mathbb F}^\times = \langle \epsilon_{\mathbb F}\rangle \times \langle \zeta_{\mathbb F}\rangle.\]
This happens precisely when $p_A(x)$ has signature $(r_1,r_2)$ satisfying
\[ r_1+r_2-1=1.\]
This does not happen when $n\geq 5$ because all of the possibilities for $r_1$ and $r_2$ imply that $r_1+r_2-1\geq 2$. Only when $n=2,3,4$ can we satisfy $r_1+r_2-1=1$. For $n=2$ this requires that $r_1=2$, $r_2=0$ (a real quadratic field), for $n=3$ this requires that $r_1=1$, $r_2=1$ (a complex cubic field), and for $n=4$ this requires that $r_1=0$, $r_2=2$ (a totally complex quartic field). 

\begin{cor}\label{cases234} Suppose $A\in{\rm GL}(n,{\mathbb Z})$ is irreducible whose characteristic polynomial $p_A(x)$ has signature $(r_1,r_2)$ satisfying $r_1+r_2-1=1$. For $\lambda$ a root of $p_A(x)$ and ${\mathbb F}={\mathbb Q}(\lambda)$, if ${\mathbb Z}[\lambda]^\times = {\mathfrak o}_{\mathbb F}^\times$ and $\lambda=\epsilon_{\mathbb F}$, then $C(A) = \langle A\rangle \times \langle J\rangle$ where $\gamma(A) = \epsilon_{\mathbb F}$ and $\gamma(J) = \zeta_{\mathbb F}$, and where $J$ satisfies $J^k=-I$ for $k$ half the order of $J$.
\end{cor}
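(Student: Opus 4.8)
The plan is to transport the internal direct product decomposition of $\mathfrak{o}_\mathbb{F}^\times$ back through the ring isomorphism $\gamma$. First, since $\mathbb{Z}[\lambda]^\times = \mathfrak{o}_\mathbb{F}^\times$, Theorem~\ref{IdentificationC(A)} gives $\gamma(C(A)) = \mathfrak{o}_\mathbb{F}^\times$. Because $\gamma$ is a ring isomorphism from $\mathbb{Q}[A]$ onto $\mathbb{F}$ and $C(A) = \mathbb{Q}[A]\cap\mathrm{GL}(n,\mathbb{Z})$ is a group under multiplication, the restriction $\gamma|_{C(A)}\colon C(A)\to\mathfrak{o}_\mathbb{F}^\times$ is a group isomorphism: it is injective because $\gamma$ is, and surjective by the previous sentence. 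So the problem reduces to understanding $\mathfrak{o}_\mathbb{F}^\times$ and pulling the structure back.

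Next I would record the structure of $\mathfrak{o}_\mathbb{F}^\times$. Since $p_A(x)$ has signature $(r_1,r_2)$ with $r_1+r_2-1=1$, Dirichlet's Unit Theorem puts $\mathfrak{o}_\mathbb{F}^\times = \langle \epsilon_\mathbb{F}\rangle\times\langle\zeta_\mathbb{F}\rangle$, an internal direct product of the infinite cyclic group generated by a fundamental unit $\epsilon_\mathbb{F}$ with the finite cyclic group of roots of unity in $\mathbb{F}$; the latter contains $-1$, hence has even order, and is generated by $\zeta_\mathbb{F}$. By hypothesis $\lambda=\epsilon_\mathbb{F}$, and from the definition $\gamma\colon v(A)\mapsto v(\lambda)$ we get $\gamma(A)=\lambda=\epsilon_\mathbb{F}$, so $\gamma$ carries $\langle A\rangle$ onto $\langle\epsilon_\mathbb{F}\rangle$. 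Set $J:=\gamma^{-1}(\zeta_\mathbb{F})$; then $J\in C(A)$ because $\zeta_\mathbb{F}\in\mathfrak{o}_\mathbb{F}^\times=\gamma(C(A))$, and $\gamma$ carries $\langle J\rangle$ onto $\langle\zeta_\mathbb{F}\rangle$.

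Now apply the group isomorphism $(\gamma|_{C(A)})^{-1}$ to $\mathfrak{o}_\mathbb{F}^\times = \langle\epsilon_\mathbb{F}\rangle\times\langle\zeta_\mathbb{F}\rangle$ to obtain $C(A) = \langle A\rangle\times\langle J\rangle$ as an internal direct product: $\langle A\rangle\cap\langle J\rangle$ is trivial because its image $\langle\epsilon_\mathbb{F}\rangle\cap\langle\zeta_\mathbb{F}\rangle$ is trivial, and $\langle A\rangle$ and $\langle J\rangle$ generate $C(A)$ because their images generate $\mathfrak{o}_\mathbb{F}^\times$. Finally, the order of $J$ equals the order of $\zeta_\mathbb{F}$, which is even, say $2k$; and $-I\in C(A)$ satisfies $\gamma(-I)=-1\in\langle\zeta_\mathbb{F}\rangle$ since $-1$ is a root of unity of $\mathbb{F}$, whence $-I\in\langle J\rangle$. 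Claim~\ref{even} then yields $J^k=-I$ with $k$ half the order of $J$.

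There is no real obstacle here; the argument is the "squeeze play" on $\mathbb{Z}[\lambda]^\times$ and $\mathfrak{o}_\mathbb{F}^\times$ already advertised, followed by transport of structure along $\gamma$. The only points deserving care are checking that the ring isomorphism $\gamma$ genuinely restricts to a group isomorphism between the two multiplicative groups $C(A)$ and $\mathfrak{o}_\mathbb{F}^\times$, and invoking the precise form of Dirichlet's Unit Theorem that identifies the torsion subgroup of $\mathfrak{o}_\mathbb{F}^\times$ with the full group of roots of unity of $\mathbb{F}$ (so it contains $-1$), since that is what forces $\langle J\rangle$ to have even order and makes Claim~\ref{even} applicable.
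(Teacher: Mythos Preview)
Your proof is correct and follows essentially the same approach as the paper's: apply Theorem~\ref{IdentificationC(A)} to get $\gamma(C(A))=\mathfrak{o}_{\mathbb F}^\times$, use Dirichlet's Unit Theorem with the signature hypothesis to write $\mathfrak{o}_{\mathbb F}^\times=\langle\epsilon_{\mathbb F}\rangle\times\langle\zeta_{\mathbb F}\rangle$, pull this back through $\gamma$ using $\gamma(A)=\lambda=\epsilon_{\mathbb F}$ and $J:=\gamma^{-1}(\zeta_{\mathbb F})$, and then invoke Claim~\ref{even}. Your version is simply more explicit about why $\gamma$ restricts to a group isomorphism and why the internal direct product transports cleanly, but the strategy is identical.
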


\begin{proof} Suppose that ${\mathbb Z}[\lambda]^\times = {\mathfrak o}_{\mathbb F}^\times$ and $\lambda = \epsilon_{\mathbb F}$. Then by Theorem \ref{IdentificationC(A)} we have that $\gamma(C(A)) = {\mathfrak o}_{\mathbb F}^\times = \langle \epsilon_{\mathbb F}\rangle \times \langle \zeta_{\mathbb F}\rangle$. Since $\gamma(A) = \lambda=\epsilon_{\mathbb F}$ and since $\gamma(J) = \zeta_{\mathbb F}$, we obtain
\[ C(A) = \langle A \rangle \times \langle J\rangle.\]
Since $-I\in \langle J\rangle$, and $J$ has order $2k$ for some $k\in{\mathbb N}$, we have by Claim \ref{even} that $J^k = -I$.
\end{proof}

The possibilities for the order of $J$ are known when $C(A)=\langle A\rangle \times \langle J\rangle$ for $\gamma(A)= \epsilon_{\mathbb F}$ and $\gamma(J) = \zeta_{\mathbb F}$. Each real quadratic or complex cubic field ${\mathbb F}$ has a real embedding, implying that the generator $\zeta_{\mathbb F}$ of the finite cyclic factor of ${\mathfrak o}_{\mathbb F}^\times$ is $-1$, hence $J=-I$ has order $2$. Each totally complex quartic field ${\mathbb F}$ has no real embedding, so it is possible for the generator $\zeta_{\mathbb F}$ of the finite cyclic factor of ${\mathfrak o}_{\mathbb F}^\times$ to be $-1$, or to be a complex root of unity of order $4$, $6$, $8$, $10$, or $12$ (see \cite{PZ}) where all of the possibilities do occur (see \cite{PS}). But only when the order of $J$ is $2$, $4$, or $8$, do we have by Lemma \ref{poweroftwo}, that for each $l$ with $0<l<2^b-1$ there exists $t\in{\mathbb N}$ such that $(J^l)^t=-I$. The existence of this $t$, when the order of $J$ is a power of two, and the element $-I\in C(A)$ play key roles in the fixed point approach and odd prime periodic point approach to proving the Main Theorem.

\section{$C^1$ Neighbourhoods of HTAs}

We use the structural stability of a hyperbolic toral automorphism to obtain an open set of diffeomorphisms in which to search for an open subset that may have trivial centralizer. A compact set $\Lambda\subset M$ invariant under $f\in{\rm Diff}^1(M)$ is hyperbolic if there exists a splitting of the  tangent space $T_{\Lambda}M=\mathbb{E}^u\oplus \mathbb{E}^s$ and positive constants $C$ and $\lambda<1$ such that, for any point $x\in\Lambda$ and any $n\in\mathbb{N}$,
$$
\begin{array}{llll}
\| D_xf^{n}v\|\leq C \lambda^{n}\| v\|,\textrm{ for }v\in
E^{s}_x \textrm{, and}\\
\| D_xf^{-n}v\|\leq C \lambda^{n}\| v\|,\textrm{ for }v\in
E^{u}_x. \end{array}
$$
When $M$ is hyperbolic for $f$, we say $f$ is an {\it Anosov diffeomorphism}. Every hyperbolic toral automorphism is Anosov. The next statement is a standard result, see for instance~\cite{KH}.

\begin{thm}\label{structural} Let $f\in \mathrm{Diff}^1(M)$ and $\Lambda$ be a hyperbolic set for $f$.  Then for any neighborhood $V$ of $\Lambda$ and every $\delta>0$ there exists a neighborhood $\mathcal{U}$ of $f$ in $\mathrm{Diff}^1(M)$ such that for any $g\in \mathcal{U}$ there is a hyperbolic set $\Lambda_g\subset V$ and a homeomorphism $h:\Lambda_g\rightarrow \Lambda$ with $d_{C^0}(\mathrm{id},h) + d_{C^0}(\mathrm{id},h^{-1})<\delta$ and $h\circ g|_{\Lambda_g}=f|_{\Lambda}\circ h$.  Moreover, $h$ is unique when $\delta$ is sufficiently small.
\end{thm}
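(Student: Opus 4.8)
The plan is to construct, for every $g$ sufficiently $C^1$-close to $f$, a continuous injection $\bar h\colon\Lambda\to M$ that is $C^0$-close to the inclusion and satisfies $\bar h\circ f|_\Lambda=g\circ\bar h$; then $\Lambda_g:=\bar h(\Lambda)$ and $h:=\bar h^{-1}$ will be the desired objects. The guiding idea is that $\bar h(x)$ should be the unique point whose full $g$-orbit shadows the $f$-orbit of $x$. (Equivalently, one could realize $\bar h$ directly as the fixed point of a contraction on a space of continuous sections of $TM$ over $\Lambda$, using hyperbolicity to invert the linearized conjugacy operator; this amounts to reproving shadowing and expansivity along the way.)

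The first step is to make hyperbolicity robust in a neighbourhood of $\Lambda$. Since $\mathbb{E}^u\oplus\mathbb{E}^s$ is a continuous $Df$-invariant splitting with uniform constants $C,\lambda$, there is a compact neighbourhood $V_1\subset V$ of $\Lambda$ carrying continuous cone fields around continuous extensions of $\mathbb{E}^u$ and $\mathbb{E}^s$ that are strictly $Df$-invariant, with uniform expansion and contraction, along orbit segments remaining in $V_1$. All of this is governed by finitely many $C^1$-open conditions on the derivative over the compact set $V_1$, so there is a $C^1$-neighbourhood $\mathcal{U}$ of $f$ for which the same cone fields and slightly weakened rate estimates hold for $Dg$ along $g$-orbit segments in $V_1$, for every $g\in\mathcal{U}$. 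This robust ``hyperbolicity in a neighbourhood'' is what replaces the assumption, false in general, that $\Lambda$ be $g$-invariant: one works not with $g|_\Lambda$ but with $g$ acting hyperbolically on orbits through $V_1$.

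With this structure in hand, the standard expansivity and shadowing arguments apply uniformly to every $g\in\mathcal{U}$ for orbits confined to $V_1$: there is an expansivity constant $\epsilon_0>0$ such that two such $g$-orbits staying $\epsilon_0$-close for all time coincide, and for each $\epsilon\in(0,\epsilon_0)$, after further shrinking $\mathcal{U}$ so that $d_{C^0}(f,g)$ falls below the associated shadowing scale, every sequence $(f^nx)_{n\in\mathbb{Z}}$ with $x\in\Lambda$, which is a $g$-pseudo-orbit lying in $\Lambda\subset V_1$, is $\epsilon$-shadowed by a unique point $\bar h(x)$ whose $g$-orbit stays in $V_1$. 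Fix once and for all $\epsilon<\min\{\epsilon_0/2,\delta/2\}$, small enough that the $\epsilon$-neighbourhood of $\Lambda$ lies in $V_1$. The remaining properties are then formal consequences of uniqueness of the shadowing point: applying $g$ merely shifts the shadowed pseudo-orbit, so $g\bar h(x)$ and $\bar h(fx)$ both $\epsilon$-shadow $(f^{n+1}x)_n$ and must agree, giving $\bar h\circ f=g\circ\bar h$; a compactness argument in $V_1$ with uniqueness yields continuity of $\bar h$; $d(f^nx,f^nx')<2\epsilon<\epsilon_0$ plus expansivity of $f|_\Lambda$ yields injectivity; $d(\bar h(x),x)<\epsilon$ yields $\Lambda_g=\bar h(\Lambda)\subset V$; and $\Lambda_g$, being a compact $g$-invariant set inside $V_1$, is hyperbolic for $g$ by the cone criterion. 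Then $h=\bar h^{-1}$ is a homeomorphism of compact metric spaces with $h\circ g|_{\Lambda_g}=f|_\Lambda\circ h$ and $d_{C^0}(\mathrm{id},h)+d_{C^0}(\mathrm{id},h^{-1})\le 2\epsilon<\delta$, and uniqueness for $\delta<\epsilon_0$ follows since any competing $h'$ would, through its inverse, produce points $\epsilon_0$-shadowing the same pseudo-orbits.

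I expect the genuine work to be concentrated in the first step, together with careful bookkeeping of the nested neighbourhoods $\Lambda\subset V_1\subset V$ and the scales $\delta$, $\epsilon$, $\epsilon_0$. Because $\Lambda$ is not assumed locally maximal, one cannot simply take $\Lambda_g$ to be a maximal $g$-invariant set in a neighbourhood; the entire construction rests on the fact that hyperbolicity, expansivity, and shadowing can be made uniform over a fixed $C^0$-neighbourhood of $\Lambda$ for all $g$ in a $C^1$-neighbourhood of $f$, even though $g$ does not preserve $\Lambda$. Once that robustness is established, the construction of $\bar h$ and the verification of all the stated properties are routine.
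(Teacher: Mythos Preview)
The paper does not supply a proof of this theorem at all: it is stated as ``a standard result, see for instance~\cite{KH}'' and then used as a black box. So there is no paper proof to compare against.

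Your outline is the standard shadowing-and-expansivity argument that one finds in Katok--Hasselblatt and similar references, and it is correct in its essentials. The key insight you identify---that one must work with hyperbolicity (cones, expansivity, shadowing) made robust over a fixed neighbourhood $V_1$ of $\Lambda$ for all $g$ in a $C^1$-neighbourhood of $f$, rather than with $g|_\Lambda$---is exactly right, as is the observation that local maximality is not assumed and so $\Lambda_g$ must be built as the image of the shadowing map rather than as a maximal invariant set. The derivation of the conjugacy relation, continuity, injectivity, hyperbolicity of $\Lambda_g$, and uniqueness from the uniqueness of shadowing points is routine once the uniform estimates are in place, just as you say.
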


Combining Theorems \ref{c.matrix} and \ref{structural} gives an affine representation, via topological conjugacy, of the elements of the centralizer of any Anosov diffeomorphism that is $C^1$ close to a hyperbolic toral automorphism.

\begin{thm}\label{nbhd1}
If $A\in{\rm GL}(n,{\mathbb Z})$ is hyperbolic, then there exists a neighborhood\, $\mathcal{U}(A)$ of $A$ in $\mathrm{Diff}^1(\mathbb{T}^n)$ such that for each $f\in\mathcal{U}(A)$ there exists a homeomorphism $h_f$ of ${\mathbb T}^n$ for which every $g\in Z(f)$ satisfies $h_f g h_f^{-1} = B+c$ for some $B\in C(A)$ and some $c\in {\mathrm Per}^1(A)$.
\end{thm}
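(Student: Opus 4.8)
The plan is to combine the structural stability of $A$ (Theorem \ref{structural}) with the Adler--Palais rigidity (Theorem \ref{c.matrix}), using the fact that a topological conjugacy transports the centralizer into the centralizer of the model. First I would apply Theorem \ref{structural} to the hyperbolic set $\Lambda = {\mathbb T}^n$ (which is all of $M$, so the neighbourhood $V$ may be taken to be ${\mathbb T}^n$ itself) with $\delta$ chosen small enough that the conjugacy is unique: this yields a $C^1$ neighbourhood $\mathcal{U}(A)$ of $A$ in $\mathrm{Diff}^1({\mathbb T}^n)$ such that every $f\in\mathcal{U}(A)$ is Anosov, hence structurally stable, and admits a unique homeomorphism $h_f$ of ${\mathbb T}^n$ with $h_f f h_f^{-1} = A$.

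The core of the argument is the following observation: if $g\in Z(f)$, then $h_f g h_f^{-1}$ commutes with $h_f f h_f^{-1} = A$. Indeed, from $fg = gf$ we get $(h_f f h_f^{-1})(h_f g h_f^{-1}) = h_f (fg) h_f^{-1} = h_f (gf) h_f^{-1} = (h_f g h_f^{-1})(h_f f h_f^{-1})$. Thus $h_f g h_f^{-1}$ is a homeomorphism of ${\mathbb T}^n$ that commutes with $A$. Since $A$ is hyperbolic, it is ergodic with respect to Lebesgue measure (as noted in the excerpt, via \cite{Rob}), so Theorem \ref{c.matrix} applies and produces $B\in C(A)$ and $c\in {\rm Per}^1(A)$ with $h_f g h_f^{-1} = B+c$. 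This is exactly the claimed conclusion.

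A technical point I would address carefully is the existence of $h_f$ as a homeomorphism of all of ${\mathbb T}^n$, rather than merely a conjugacy on a hyperbolic subset: since $\Lambda = {\mathbb T}^n$ is the whole manifold, the set $\Lambda_g \subset V = {\mathbb T}^n$ furnished by Theorem \ref{structural} is forced to be ${\mathbb T}^n$ as well (it is a closed invariant set whose image under $h$ is all of ${\mathbb T}^n$, and $h$ is a homeomorphism onto $\Lambda$), so $h_f := h^{-1}$ is a genuine self-homeomorphism of ${\mathbb T}^n$ conjugating $f$ to $A$ globally. I would also remark that the uniqueness clause of Theorem \ref{structural}, obtained by shrinking $\mathcal{U}(A)$ if necessary, is what pins down $h_f$ unambiguously (this uniqueness is not logically needed for the present statement but is used in later sections, so it is worth recording here).

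The main obstacle is really just making sure the hypotheses of Theorem \ref{c.matrix} are legitimately in force for the conjugated map $h_f g h_f^{-1}$ — that is, that it is a homeomorphism (clear, since it is a composition of homeomorphisms) and that $A$ is ergodic (clear from hyperbolicity). Beyond that, the proof is a short diagram chase; there is no estimate or construction of substance, only the bookkeeping of identifying $\Lambda_g$ with ${\mathbb T}^n$ and invoking the two cited theorems in sequence.
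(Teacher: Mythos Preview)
Your proposal is correct and follows essentially the same route as the paper: apply structural stability (Theorem~\ref{structural}) to obtain $h_f$ with $h_f f h_f^{-1}=A$, conjugate $g\in Z(f)$ by $h_f$ to get a homeomorphism commuting with $A$, and invoke Adler--Palais (Theorem~\ref{c.matrix}) using that hyperbolic implies ergodic. Your added remarks on why $\Lambda_g={\mathbb T}^n$ and on the uniqueness of $h_f$ are not in the paper's proof but are reasonable clarifications.
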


\begin{proof}
By Theorem \ref{structural} there exists a neighborhood $\mathcal{U}(A)$ of $A$ in $\mathrm{Diff}^1(\mathbb{T}^n)$ such that each $f\in \mathcal{U}(A)$ is topologically conjugate to $A$ by a homeomorphism $h_f$ that is close to the identity:
\[ f = h_f^{-1}Ah_f.\]
For $g\in Z(f)$, we have that
\begin{align*}
h_f g h_f^{-1}A
& = h_f g(h_f^{-1}Ah_f)h_f^{-1} \\
&= h_f gf h_f^{-1} \\
& = h_f fg h_f^{-1} \\
& = (h_f f h_f^{-1})h_f g h_f^{-1} \\
& = A h_f g h_f^{-1}.
\end{align*}
Thus the homeomorphism $h_f g h_f^{-1}$ commutes with the hyperbolic toral automorphism $A$. Use of Theorem \ref{c.matrix} shows that the  homeomorphism $h_f gh_f^{-1}$ is affine, so that
\[ h_f g h_f^{-1} = B+c\]
for some $B\in C(A)$ and some $c\in{\rm Per}^1(A)$.
\end{proof}

The affine representation of $Z(f)$ in Theorem \ref{nbhd1} gives rise to a complete characterization of $Z(f)$ for every $f\in {\mathcal U}(A)$ when $A$ is hyperbolic. The map $g\to h_f g h_f^{-1}$ from the group of homeomorphisms of ${\mathbb T}^n$ to itself is an inner automorphism. Thus the group
\[ H_f(A) = h_f^{-1}(Z(A))h_f\] 
is isomorphic to $Z(A)$. It follows that because $Z(A)$ is countable, the group $H_f(A)$ is countable for each $f\in{\mathcal U}(A)$. As is well known \cite{Sma1}, Theorem \ref{structural} implies that each $H_f(A)$ is discrete, i.e., the only homeomorphism close to the identity that commutes with $f$ is the identity. The proof of the following is immediate by Theorem \ref{nbhd1}.

\begin{cor}\label{homeo} If $A\in {\rm GL}(n,{\mathbb Z})$ is hyperbolic, then for each $f\in {\mathcal U}(A)$ we have
\[ Z(f) = H_f(A)\cap {\rm Diff}^1({\mathbb T}^n).\]
\end{cor}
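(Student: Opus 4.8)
The plan is to deduce Corollary \ref{homeo} directly from Theorem \ref{nbhd1} together with the definition of $H_f(A)$ and the basic fact that the inner automorphism $g\mapsto h_f g h_f^{-1}$ is a bijection on the group of homeomorphisms of ${\mathbb T}^n$. First I would observe the trivial inclusion: any $g\in Z(f)$ is by definition a $C^1$ diffeomorphism commuting with $f$, so $g\in{\rm Diff}^1({\mathbb T}^n)$; and by Theorem \ref{nbhd1} we have $h_f g h_f^{-1} = B+c$ for some $B\in C(A)$, $c\in{\rm Per}^1(A)$, hence $h_f g h_f^{-1}\in Z(A)$, which gives $g = h_f^{-1}(h_f g h_f^{-1})h_f \in h_f^{-1}(Z(A))h_f = H_f(A)$. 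Therefore $Z(f)\subseteq H_f(A)\cap{\rm Diff}^1({\mathbb T}^n)$.

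For the reverse inclusion, suppose $g\in H_f(A)\cap{\rm Diff}^1({\mathbb T}^n)$. Then $g$ is a $C^1$ diffeomorphism, and $g = h_f^{-1}\varphi h_f$ for some $\varphi\in Z(A)$, i.e. $\varphi A = A\varphi$. The key computation is to reverse the conjugation in the proof of Theorem \ref{nbhd1}: using $A = h_f f h_f^{-1}$, one checks that $gf = h_f^{-1}\varphi h_f\, h_f^{-1}Ah_f\, = h_f^{-1}\varphi A h_f = h_f^{-1}A\varphi h_f = h_f^{-1}A h_f\, h_f^{-1}\varphi h_f = fg$. Hence $g$ commutes with $f$ and is a $C^1$ diffeomorphism, so $g\in Z(f)$. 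Combining the two inclusions yields the claimed equality.

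There is essentially no obstacle here; the statement is a packaging of Theorem \ref{nbhd1}. The only point requiring minor care is that $Z(f)$ is defined as a subset of ${\rm Diff}^1({\mathbb T}^n)$ (so the intersection with ${\rm Diff}^1({\mathbb T}^n)$ on the right-hand side is what makes the identity literally true, since $H_f(A)$ a priori sits inside the larger group of homeomorphisms, containing affine maps $B+c$ that need not be smooth once transported back by the merely-continuous conjugacy $h_f$). I would simply remark that $H_f(A)$ consists of all homeomorphisms of ${\mathbb T}^n$ of the form $h_f^{-1}(B+c)h_f$ with $B\in C(A)$, $c\in{\rm Per}^1(A)$, that each such map automatically commutes with $f$ by the computation above, and that $Z(f)$ is exactly the subfamily of these that happen to be $C^1$ diffeomorphisms. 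This is precisely the content of the displayed equation, and no further argument is needed.
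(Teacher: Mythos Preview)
Your proof is correct and takes essentially the same approach as the paper, which simply asserts that the corollary is ``immediate by Theorem \ref{nbhd1}'' without spelling out the two inclusions. You have merely written out explicitly the forward inclusion (via Theorem \ref{nbhd1}) and the reverse inclusion (via the straightforward conjugation computation), which is exactly what ``immediate'' is hiding.
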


The centralizer of each $f\in {\mathcal U}(A)$ can be no bigger than $H_f(A)$ by Corollary \ref{homeo} when $A$ is hyperbolic. The group structure of $H_f(A)$ is the same for all $f\in {\mathcal U}(A)$, and is determined by the group structure of the $Z(A)$. The group structures of $C(A)$ and ${\rm Per}^1(A)$ determine the group structure of $Z(A)$. For instance, because $-I\in C(A)$, the homeomorphism
\[ g=h_f^{-1}(-I+c)h_f,\]
for any $c\in {\rm Per}^1(A)$, belongs to $H_f(A)$ and is an involution, i.e., $g^2={\rm id}$. Another example is the homeomorphism
\[ g=h_f^{-1}(-A+c)h_f,\] 
for $c\in {\rm Per}^1(A)$, that belongs to $H_f(A)$, but has infinite order. In both of these examples, neither of the given homeomorphisms in $H_f(A)$ is an integer power of $f$.

All of the diffeomorphisms $f\in {\mathcal U}(A)$ have the same dynamics in the topological sense because $h_f f h_f^{-1} =A$ when $A$ is hyperbolic. In particular, each $f\in {\mathcal U}(A)$ has the same number of periodic points of given period as that of $A$. For $k\in{\mathbb N}$, let ${\rm Per}^k(f)$ denote the set of periodic points of $f$ of periodic $k$. The set ${\rm Per}^k(A)$ is a finite subgroup of ${\mathbb T}^n$ when $A^k-I$ is invertible. In this case, by Pontryagin duality, ${\rm Per}^k(A)$ is isomorphic to the generalized Bowen-Franks group
\[ BF_k(A) = {\mathbb Z}^n/(A^k-I){\mathbb Z}^n\]
(see \cite{MR}). The groups $BF_k(A)$ play a significant role in the classification of hyperbolic toral automorphisms \cite{BM}, and in the classification of quasiperiodic flows of Koch type under projective conjugacy \cite{Bak2}. The group structure of $BF_k(A)$ is found by computing the Smith normal form of $A^k-I$ (see \cite{Coh}). From this we get the cardinality $\vert BF_k(A)\vert$, and hence the cardinality $\vert {\rm Per}^k(A)\vert$. When $A$ is hyperbolic, then $A^k-I$ is invertible for all $k\in{\mathbb N}$, so that $\vert {\rm Per}^k(A)\vert$ is finite for all $k\in{\mathbb N}$, and so for each $f\in {\mathcal U}(A)$ the group ${\rm Per}^k(f)$ is finite for all $k\in{\mathbb N}$ as well.

The trivial centralizer problem for $f\in {\mathcal U}(A)$ reduces by Corollary \ref{homeo} to showing that the only diffeomorphisms in $H_f(A)$ are the integer powers of $f$. For a hyperbolic toral automorphism $A$, each affine map $B+c\in Z(A)$ corresponds to a homeomorphism $g = h_f^{-1}(B+c)h_f \in H_f(A)$ that commutes with $f$. The map $g$ permutes the points of the finite set ${\rm Per}^k(f)$ because for $p\in\mathrm{Per}^k(f)$ we have $g(p)\in\mathrm{Per}^k(f)$. If  $g$ is a diffeomorphism, then $g\in Z(f)$ and for $p\in\mathrm{Per}^k(f)$ we have
\[ D_{g(p)}f^kD_pg=D_pgD_pf^k,\]
implying that the linear maps $D_{g(p)}f^k$ and $D_pf^k$ are similar. However, because ${\rm Per}^k(A)$ is a finite set, there exists inside ${\mathcal U}(A)\setminus\{A\}$ an open subset ${\mathcal U}_k(A)$ in which any two distinct $k$-periodic orbits have different spectra (see \cite{KH}).

This sets the stage for proving the triviality of $Z(f)$ for $f$ in the open set ${\mathcal U}_k(A)$ when $A$ is a hyperbolic toral automorphism. Algebraic conditions on $A$ to achieve a trivial centralizer for $f$ are that $A$ is irreducible and
\[ C(A) = \langle A\rangle\times\langle J\rangle\]
where $J\in C(A)$ has order $2^b$ for some $b\in{\mathbb N}$. This can only happen when $n=2,3,4$. In this case, by Theorem \ref{nbhd1}, each $g\in Z(f)$ satisfies
\[ h_f g h_f^{-1} = A^m J^l + c\]
for some $m\in{\mathbb Z}$, some $0\leq l\leq 2^b-1$, and some $c\in {\rm Per}^1(A)$. We will show that there are readily verifiable numerical conditions on ${\rm Per}^p(A)$, for $p=1$ and/or for $p$ an odd prime, that force $l=0$ and $c=0$ for every $g\in Z(f)$, to give $g=f^m$.

\section{Fixed Point Approach}

An approach to obtaining the triviality of the centralizer of each $f\in {\mathcal U}_1(A)$ is through its fixed points. At issue are  the homeomorphisms in $H_f(A)\setminus \langle f\rangle$, and showing that they are not diffeomorphisms. One example of such a homeomorphism is the involution $g=h_f^{-1} (-I+c)h_f$ for each $c\in{\rm Per}^1(A)$. Through the Smith normal form, the values of $\vert{\rm Per}^1(A)\vert$ and $\vert {\rm Per}^1(-I)\vert$ are readily computable. For the latter we have $\vert {\rm Per}^1(-I)\vert = 2^n$ when $I$ is $n\times n$.

\begin{lem}\label{minusidentity} Suppose $A\in{\rm GL}(n,{\mathbb Z})$ is hyperbolic.  If
\[ \vert{\rm Per}^1(A)\vert>\vert{\rm Per}^1(-I)\vert,\]
then for each $f\in{\mathcal U}_1(A)$, no $g\in Z(f)$ satisfies $h_fgh_f^{-1}=-I+c$ where $c\in{\rm Per}^1(A)$.
\end{lem}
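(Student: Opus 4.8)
The plan is to argue by contradiction, using a count of fixed points together with the spectral separation built into $\mathcal{U}_1(A)$. Suppose $f\in\mathcal{U}_1(A)$ and there is $g\in Z(f)$ with $h_fgh_f^{-1}=-I+c$ for some $c\in{\rm Per}^1(A)$. First I would record two elementary facts about the affine map $-I+c$ on $\mathbb{T}^n$. It is an involution, since $(-I+c)^2(\theta)=-(-\theta+c)+c=\theta$. And its fixed point set is $\{\theta\in\mathbb{T}^n:2\theta=c\}$, which has exactly $2^n$ elements, because multiplication by $2$ is a surjective endomorphism of $\mathbb{T}^n$ whose kernel has order $2^n$; in particular $|{\rm Per}^1(-I+c)|=|{\rm Per}^1(-I)|=2^n$, independently of $c$. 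Conjugating by the homeomorphism $h_f$, the map $g$ is an involution, $g^2={\rm id}$, whose fixed point set equals $h_f^{-1}({\rm Per}^1(-I+c))$ and hence has cardinality $2^n$.

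Next I would bring in the dynamics. Since $g$ commutes with $f$, it permutes the finite set ${\rm Per}^1(f)$, and being an involution it acts there as a disjoint union of fixed points and transpositions. The number of points of ${\rm Per}^1(f)$ fixed by $g$ is at most the size of the fixed point set of $g$, namely $2^n$, whereas $|{\rm Per}^1(f)|=|{\rm Per}^1(A)|>2^n$ by hypothesis, using $f=h_f^{-1}Ah_f$. Hence at least one transposition must occur: there are distinct fixed points $p,q\in{\rm Per}^1(f)$ with $g(p)=q$ and $g(q)=p$. Now differentiate $gf=fg$ at $p$; since $f(p)=p$, $g(p)=q$, and $f(q)=q$, the chain rule gives $D_pg\circ D_pf=D_qf\circ D_pg$, and $D_pg$ is invertible because $g$ is a diffeomorphism. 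Thus $D_pf$ and $D_qf$ are similar, so they have the same spectrum. Since $p\ne q$ are distinct fixed points, i.e.\ distinct periodic orbits of period $1$, this contradicts the defining property of $\mathcal{U}_1(A)$ that distinct $1$-periodic orbits have different spectra, completing the proof.

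The genuinely essential ingredient, and the one I would be most careful to state correctly, is that translating $-I$ by $c$ does not change the number of fixed points: $|{\rm Per}^1(-I+c)|=2^n$ for every $c$, which is precisely the surjectivity of multiplication by $2$ on the torus. This is what makes the inequality $2^n=|\{g=\mathrm{id}\}|<|{\rm Per}^1(f)|$ force a genuine $2$-cycle of $g$ among the fixed points of $f$; after that, the argument is just the standard derivative–conjugacy identity for commuting diffeomorphisms combined with the spectral genericity of $\mathcal{U}_1(A)$. No use is made here of $J$, of irreducibility, or of the algebraic structure of $C(A)$.
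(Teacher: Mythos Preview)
Your proof is correct and follows essentially the same approach as the paper's: both use that $g$, being conjugate to $-I+c$, has too few fixed points to fix all of ${\rm Per}^1(f)$, and then derive a spectral contradiction from the derivative identity at a moved fixed point. The only organizational difference is that you treat the cases $c=0$ and $c\ne 0$ uniformly via the exact count $|{\rm Per}^1(-I+c)|=2^n$, whereas the paper splits off $c\ne 0$ with the simpler observation that $g(\theta_0)=\theta_i\ne\theta_0$ already gives the needed distinct pair.
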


\begin{proof} Let $f\in {\mathcal U}_1(A)$ be arbitrary. Label the finitely many elements of ${\rm Per}^1(A)$ (the fixed points of $A$) by
\[ c_0=0, c_1,c_2,\dots, c_{k-1}.\]
Then the fixed points of $f$ are given by
\[ \theta_i = h_f^{-1}(c_i),\ i=0,1,\dots, k-1.\]
For $g\in Z(f)$, we have by Theorem \ref{nbhd1} that $h_f g h_f^{-1} = B+c$ where $B\in C(A)$ and $c\in {\rm Per}^1(A)$. Suppose $B=-I$.

Suppose that $c=0$. Then we have that
\[ 0 = -I(0) = h_f gh_f^{-1}(0) = h_f(g(\theta_0)),\]
so that
\[ \theta_0 = h_f(0) = g(\theta_0).\]
By hypothesis, $-I$ has less than $k$ fixed points, and so $g$ (being topologically conjugate to $-I$) also has less than $k$ fixed points. There is then $1\leq j\leq k-1$ such that $g(\theta_j)\ne \theta_j$. Since $g$ permutes the fixed points of $f$, there is $1\leq l\leq k-1$, $l\ne j$ , such that $g(\theta_j) = \theta_l$. From $g\in Z(f)$, i.e., $fg=gf$ with $g$ a diffeomorphism, we then get
\[ D_{\theta_l} f D_{\theta_j} g = D_{g(\theta_j)}f D_{\theta_j} g = D_{f(\theta_j)} g D_{\theta_j} f =D_{\theta_j} g D_{\theta_j} f.\]
This says that $D_{\theta_l} f$ and $D_{\theta_j} f$ are similar, contradicting that $f\in{\mathcal U}_1(A)$. This implies that no $g\in Z(f)$ satisfies $h_f g h_f^{-1} = -I$.

Suppose that $c\ne 0$. Then we have that $c = c_i$ for some $1\leq i\leq k-1$. It follows that
\[ c_i = (-I + c_i)(0) = h_f g h_f^{-1} (0)  = h_f (g(\theta_0)).\]
This implies that
\[ \theta_i = h_f^{-1}(c_i) = g(\theta_0).\]
From $g\in Z(f)$ we get
\[ D_{\theta_i} f D_{\theta_0} g =D_{g(\theta_0)} f D_{\theta_0}g  = D_{f(\theta_0)} g D_{\theta_0}f = D_{\theta_0} g D_{\theta_0} f.\]
This says that $D_{\theta_i} f$ and $D_{\theta_0}f $ are similar, contradicting that $f\in {\mathcal U}_1(A)$. So no $g\in Z(f)$ satisfies $h_f g h_f^{-1} = -I +c$.
\end{proof}

Another type of  homeomorphism in $H_f(A)$ is the finite order $g=h_f^{-1}(I+c)h_f$ for a nonzero $c\in{\rm Per}^1(A)$, when such a $c$ exists.  Each such $g$ has finite order because ${\rm Per}^1(A)$ is finite. We can eliminate these as diffeomorphism without any conditions on ${\rm Per}^1(A)$.

\noindent\begin{lem}\label{identity} Suppose $A\in{\rm GL}(n,{\mathbb Z})$ is hyperbolic. Then for each $f\in {\mathcal U}_1(A)$, no $g\in Z(f)$ satisfies $h_f g h_f^{-1} = I+c$ for nonzero $c\in{\rm Per}^1(A)$.
\end{lem}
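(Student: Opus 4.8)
The plan is to argue by contradiction, exploiting the fact that the translation $I+c$ of ${\mathbb T}^n$ by a nonzero element $c$ has \emph{no} fixed points at all, so that any homeomorphism conjugate to it must move every fixed point of $f$. Because $c\in{\rm Per}^1(A)$ is a torsion point of ${\mathbb T}^n$, the translation $I+c$ also has finite order, so the homeomorphism $g$ in question would be a finite-order element of $H_f(A)$; this is the context, but the order itself will not be needed.

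Fix an arbitrary $f\in{\mathcal U}_1(A)$ and suppose some $g\in Z(f)$ satisfies $h_fgh_f^{-1}=I+c$ with $c\in{\rm Per}^1(A)$ nonzero. As in the proof of Lemma \ref{minusidentity}, set $\theta_0=h_f^{-1}(0)$, which is a fixed point of $f$ because $0\in{\rm Per}^1(A)$. Applying $g=h_f^{-1}(I+c)h_f$ to $\theta_0$ gives
\[ g(\theta_0)=h_f^{-1}\bigl((I+c)(0)\bigr)=h_f^{-1}(c).\]
Since $c\in{\rm Per}^1(A)$, the point $\theta_i:=h_f^{-1}(c)$ is again a fixed point of $f$, and since $c\neq 0$ and $h_f$ is a bijection we have $\theta_i\neq\theta_0$.

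Now differentiate the relation $fg=gf$ at $\theta_0$. Using $f(\theta_0)=\theta_0$ and $g(\theta_0)=\theta_i$, and that $g\in Z(f)\subset{\rm Diff}^1({\mathbb T}^n)$ so that this is legitimate, we obtain
\[ D_{\theta_i}f\,D_{\theta_0}g=D_{g(\theta_0)}f\,D_{\theta_0}g=D_{f(\theta_0)}g\,D_{\theta_0}f=D_{\theta_0}g\,D_{\theta_0}f,\]
so $D_{\theta_i}f$ and $D_{\theta_0}f$ are similar. But $\theta_0$ and $\theta_i$ are distinct fixed points of $f$, hence distinct $1$-periodic orbits, and this contradicts the defining property of ${\mathcal U}_1(A)$ that distinct $1$-periodic orbits have different spectra. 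Hence no such $g$ exists.

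The feature that lets this run with no cardinality hypothesis, in contrast with Lemma \ref{minusidentity}, is simply that $I+c$ is fixed-point free, which forces $g(\theta_0)\neq\theta_0$ immediately; there is no real obstacle beyond keeping track that $g$ is a diffeomorphism so the spectral comparison applies. The same argument may be run at any fixed point of $f$, or phrased globally: $g$ is conjugate to the finite-order fixed-point-free translation $I+c$, so it permutes the finite set ${\rm Per}^1(f)$ in orbits all of size at least two, and any such orbit supplies two distinct fixed points of $f$ with similar derivatives, contradicting $f\in{\mathcal U}_1(A)$.
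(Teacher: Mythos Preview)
Your proof is correct and essentially identical to the paper's: both compute $g(\theta_0)=h_f^{-1}(c)=\theta_i\neq\theta_0$ and then differentiate $fg=gf$ at $\theta_0$ to obtain that $D_{\theta_i}f$ and $D_{\theta_0}f$ are similar, contradicting $f\in{\mathcal U}_1(A)$. Your added commentary on why no cardinality hypothesis is needed (the translation $I+c$ being fixed-point free) is a helpful clarification but not a different argument.
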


\begin{proof} Let $f\in {\mathcal U}_1(A)$ be arbitrary. Let $c_0=0,c_1,\dots,c_{k-1}$ be the fixed points of $A$, and $\theta_i=h_f^{-1}(c_i)$, $i=0,\dots,k-1$, the fixed points of $f$. Suppose $h_f g h_f^{-1}=I+c$ for $c$ a nonzero fixed point of $A$. Then $c=c_i$ for some $1\leq i\leq k-1$, so that
\[ h_f(\theta_i) = c_i  = (I+c_i)(0) = h_f gh_f^{-1}(0) = h_f(g(\theta_0)).\]
This says that $\theta_i  = g (\theta_0)$. From this and $g\in Z(f)$, we get
\[ D_{\theta_i} f D_{\theta_0} g = D_{g(\theta_0)} f D_{\theta_0}g  = D_{f(\theta_0)} g D_{\theta_0}f = D_{\theta_0} g D_{\theta_0} f.\]
This says that $D_{\theta_i}f$ and $D_{\theta_0} f$ are similar, contradicting that $f\in{\mathcal U}_1(A)$. So no $g\in Z(f)$ satisfies $h_f g h_f^{-1} = I + c$ for a nonzero $c\in {\rm Per}^1(A)$.
\end{proof}

A homeomorphism in $H_f(A)$ of infinite order is $g= h_f^{-1}(A^m+c)h_f$, for $m\in{\mathbb Z}$ and a nonzero $c\in{\rm Per}^1(A)$, when such a $c$ exists. We can now eliminate these as diffeomorphisms without conditions on ${\rm Per}^1(A)$.

\begin{lem}\label{plusminus} Suppose $A\in{\rm GL}(n,{\mathbb Z})$ is hyperbolic. For each $f\in {\mathcal U}_1(A)$, no $g\in Z(f)$ satisfies $h_f g h_f^{-1}= A^{m} + c$ for $m\in{\mathbb Z}$ and nonzero $c\in{\rm Per}^1(A)$.
\end{lem}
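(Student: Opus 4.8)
The plan is to follow the same template as the proofs of Lemmas \ref{minusidentity} and \ref{identity}: assume for contradiction that some $g\in Z(f)$ has $h_fgh_f^{-1}=A^m+c$ with $c\ne 0$, track the image under $g$ of the distinguished fixed point $\theta_0=h_f^{-1}(0)$ of $f$, and extract from the commuting relation $fg=gf$ a similarity of derivatives at two distinct fixed points of $f$, contradicting the defining property of ${\mathcal U}_1(A)$.

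In detail, I would first label the fixed points of $A$ as $c_0=0,c_1,\dots,c_{k-1}$, so that $\theta_i=h_f^{-1}(c_i)$, $i=0,\dots,k-1$, are the fixed points of $f$. The key observation is that $A^m$ is a linear automorphism of ${\mathbb T}^n$, hence $A^m(0)=0$, so
\[ h_f\bigl(g(\theta_0)\bigr)=(h_fgh_f^{-1})(0)=(A^m+c)(0)=A^m(0)+c=c.\]
Since $c$ is a nonzero fixed point of $A$, we have $c=c_i$ for some $1\le i\le k-1$, whence $g(\theta_0)=h_f^{-1}(c_i)=\theta_i\ne\theta_0$. Now differentiating $fg=gf$ at $\theta_0$ and using $f(\theta_0)=\theta_0$ gives
\[ D_{\theta_i}f\,D_{\theta_0}g=D_{g(\theta_0)}f\,D_{\theta_0}g=D_{f(\theta_0)}g\,D_{\theta_0}f=D_{\theta_0}g\,D_{\theta_0}f.\]
Because $g$ is a diffeomorphism, $D_{\theta_0}g$ is invertible, so $D_{\theta_i}f$ and $D_{\theta_0}f$ are similar; this contradicts $f\in{\mathcal U}_1(A)$, in which distinct $1$-periodic orbits have different spectra. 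Hence no such $g$ exists.

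I do not expect any real obstacle here: the argument is essentially a repackaging of Lemma \ref{identity}, the only new point being that the linear part $A^m$, whatever the integer $m$ is, fixes the origin and so plays no role beyond that of the identity in the previous lemma. It is worth noting that the statement deliberately excludes $c=0$: when $c=0$ the map $A^m+0$ conjugates back to $f^m\in\langle f\rangle$, which is allowed, so there is genuinely nothing to prove in that case.
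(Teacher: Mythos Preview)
Your proof is correct. It differs slightly from the paper's argument: the paper reduces to Lemma \ref{identity} by replacing $g$ with $f^{-m}g\in Z(f)$ and computing
\[ h_f(f^{-m}g)h_f^{-1}=A^{-m}(A^m+c)=I+A^{-m}c=I+c,\]
then invoking Lemma \ref{identity} directly. You instead unfold the argument of Lemma \ref{identity} in place, using the observation (which you correctly identify as the key point) that the linear part $A^m$ fixes the origin, so the computation at $\theta_0$ goes through unchanged. Both routes are short; the paper's buys a cleaner modular structure (the reduction trick $g\mapsto f^{-m}g$ recurs later in Theorems \ref{precursorfixed} and \ref{precursoroddprime}), while yours is self-contained and makes explicit why the value of $m$ is irrelevant.
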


\begin{proof} For $g\in Z(f)$ suppose that $h_f g h_f^{-1} = A^m + c$ for $m\in{\mathbb Z}$ and $c$ a nonzero fixed point of $A$. Since $h_f f  h_f^{-1} = A$, we have that $h_f f^{-m}h_f^{-1}=A^{-m}$. Since $g\in Z(f)$ and $f^{-m}\in Z(f)$, we have that $f^{-m}g\in Z(f)$. Then
\begin{align*}
h_f f^{-m}g h_f^{-1}
& = (h_f f^{-m} h_f^{-1}) ( h_f g h_f^{-1}) \\
& = A^{-m}(A^{m} + c) \\
& = I + A^{-m}c \\
& = I + c,
\end{align*}
since $c\in{\rm Per}^1(A)$. But with $f\in {\mathcal U}_1(A)$, it is impossible for $f^{-m}g\in Z(f)$ to satisfy $h_f (f^{-m}g)h_f^{-1}= I + c$ for $c\ne 0$ by Lemma \ref{identity}.
\end{proof}

Having shown that several homeomorphisms in $H_f(A)$ are not diffeomorphisms through the fixed point approach, we are now in a position to state the sufficient conditions by which each $f\in {\mathcal U}_1(A)$ has trivial centralizer. 

\begin{thm}\label{precursorfixed} Let $A\in{\rm GL}(n,{\mathbb Z})$ be hyperbolic. If $C(A) = \langle A\rangle \times \langle J\rangle$ where the order of $J$ is $2^b$ for some $b\in{\mathbb N}$, and $\vert {\rm Per}^1(A)\vert>\vert {\rm Per}^1(-I)\vert$, then $Z(f)$ is trivial for all $f\in {\mathcal U}_1(A)$.
\end{thm}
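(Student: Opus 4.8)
The plan is to combine the affine representation of $Z(f)$ from Theorem~\ref{nbhd1} with the three elimination lemmas of this section and the power-of-two structure of $\langle J\rangle$ supplied by Lemma~\ref{poweroftwo}. Fix $f\in\mathcal{U}_1(A)$ and an arbitrary $g\in Z(f)$. Since $\mathcal{U}_1(A)\subset\mathcal{U}(A)$, Theorem~\ref{nbhd1} together with the hypothesis $C(A)=\langle A\rangle\times\langle J\rangle$ gives
\[ h_f g h_f^{-1} = A^m J^l + c \]
for some $m\in\mathbb{Z}$, some $0\le l\le 2^b-1$, and some $c\in\mathrm{Per}^1(A)$. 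Note that $-I\in\langle J\rangle$, since $-I\in C(A)=\langle A\rangle\times\langle J\rangle$ has finite order while $A$ has infinite order, so Lemma~\ref{poweroftwo} is available. The goal is to force $l=0$ and $c=0$; once this is done, $h_f g h_f^{-1}=A^m$, hence $g=h_f^{-1}A^m h_f=(h_f^{-1}Ah_f)^m=f^m$, and $Z(f)=\langle f\rangle$.

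First I would normalize by passing to $g'=f^{-m}g$, which again lies in $Z(f)$ and is a diffeomorphism. Using $h_f f^{-m}h_f^{-1}=A^{-m}$ and $A^{-m}c=c$ (as $c$ is fixed by $A$), conjugation yields $h_f g' h_f^{-1}=A^{-m}(A^m J^l+c)=J^l+c$. Thus it suffices to prove that no element of $Z(f)$ can conjugate to $J^l+c$ with $0<l\le 2^b-1$, and that if an element of $Z(f)$ conjugates to $I+c$ then necessarily $c=0$; the second assertion is precisely Lemma~\ref{identity} (or one may instead apply Lemma~\ref{plusminus} to $g$ itself before normalizing).

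Next I would rule out $0<l\le 2^b-1$. Assume for contradiction that $h_f g' h_f^{-1}=J^l+c$ with $0<l\le 2^b-1$. By Lemma~\ref{poweroftwo} there is $t\in\mathbb{N}$ with $(J^l)^t=-I$. Then $(g')^t\in Z(f)$ is a diffeomorphism, and iterating the affine map gives
\[ h_f (g')^t h_f^{-1} = (J^l+c)^t = (J^l)^t + \Big(\sum_{i=0}^{t-1}(J^l)^i\Big)c = -I + c', \qquad c':=\Big(\sum_{i=0}^{t-1}(J^l)^i\Big)c. \]
The point to check is that $c'\in\mathrm{Per}^1(A)$: since $J^l\in C(A)$ it sends the finite subgroup $\mathrm{Per}^1(A)\subset\mathbb{T}^n$ into itself (if $A\theta=\theta$ then $A(J^l\theta)=J^l(A\theta)=J^l\theta$), and $\mathrm{Per}^1(A)$, being a group, is closed under the finite sum defining $c'$. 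So $(g')^t\in Z(f)$ would conjugate to $-I+c'$ with $c'\in\mathrm{Per}^1(A)$, contradicting Lemma~\ref{minusidentity} under the hypothesis $|\mathrm{Per}^1(A)|>|\mathrm{Per}^1(-I)|$. Hence $l=0$.

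With $l=0$ we have $h_f g' h_f^{-1}=I+c$, so $c=0$ by Lemma~\ref{identity}. Therefore $h_f g h_f^{-1}=A^m$ and $g=f^m$, which proves that $Z(f)$ is trivial for every $f\in\mathcal{U}_1(A)$. I expect the main obstacle to be the third paragraph: correctly extracting the iterate $t$ from Lemma~\ref{poweroftwo} and verifying that the accumulated translation $c'$ remains in $\mathrm{Per}^1(A)$, so that Lemma~\ref{minusidentity} is genuinely applicable; everything else is formal manipulation of affine maps and the group law of $Z(f)$.
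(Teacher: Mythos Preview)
Your proof is correct and follows essentially the same approach as the paper's. The only cosmetic difference is the order of the two normalizations: you first pass to $g'=f^{-m}g$ and then take the $t$-th power, whereas the paper first takes $g^t$ (obtaining $-A^{mt}+\tilde c$) and then composes with $f^{-mt}$; both routes land on an element of $Z(f)$ conjugating to $-I+c'$ and invoke Lemma~\ref{minusidentity}. Your explicit formula $c'=\sum_{i=0}^{t-1}(J^l)^i c$ and the check that $c'\in\mathrm{Per}^1(A)$ are a bit more detailed than the paper's one-line remark that $J$ permutes $\mathrm{Per}^1(A)$, but the content is identical.
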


\begin{proof} Let $f\in {\mathcal U}_1(A)$ for hyperbolic $A$, and let $g\in Z(f)$. By Theorem \ref{nbhd1} we have that
\[ h_f g h_f^{-1} = B+c\]
where $B\in C(A)$ and $c\in {\rm Per}^1(A)$. With $C(A) = \langle A\rangle \times \langle J\rangle$ where the order of $J$ is $2^b$ for some $b\in{\mathbb N}$, we have that $B=A^mJ^l$ for some $m\in {\mathbb Z}$ and some $0\leq l\leq 2^b-1$. Hence
\[ h_f g h_f^{-1} = A^m J^l +c.\]

Suppose that $l\ne 0$. By Lemma \ref{poweroftwo}, there is $t\in{\mathbb N}$ such that $(J^l)^t=-I$. This implies that
\[ h_f g^t h_f^{-1} = -A^{mt}+\tilde c\]
for some $\tilde c\in {\rm Per}^1(A)$ because $J\in Z(A)$ permutes the elements of ${\rm Per}^1(A)$. Since $h_f f^{-mt} h_f^{-1} = A^{-mt}$, we have
\[ h_f f^{-mt}g^t h_f^{-1} = -I + \tilde c.\]
This is impossible by Lemma \ref{minusidentity} because $\vert{\rm Per}^1(A)\vert > \vert{\rm Per}^1(-I)\vert$.

So it must be that $l=0$. Then we have $h_f g h_f^{-1} = A^m+c$. By Lemma \ref{plusminus} it must be that $c=0$, so that $h_f g h_f^{-1} = A^m$. Since $h_f f^m h_f^{-1}=A^m$, we obtain $g=f^m$. With $g\in Z(f)$ being arbitrary we have that $Z(f)$ is trivial. Since $f$ is arbitrary, we have that $Z(f)$ is trivial for all $f\in {\mathcal U}_1(A)$.
\end{proof}

We explain the necessity in Theorem \ref{precursorfixed} of the order of $J$ being $2^b$ for some $b\in{\mathbb N}$. For a hyperbolic $A\in{\rm GL}(n,{\mathbb Z})$ suppose that $C(A) = \langle A \rangle \times \langle J\rangle$ where the order of $J$ is $2^dk$ for some $d\in{\mathbb N}$ and some odd integer $k>2$. By Lemma \ref{poweroftwo}, for $l=2^d$ there is no $t\in {\mathbb N}$ such that $(J^l)^t=-I$. For each $f\in {\mathcal U}(A)$, an element of $H_f(A)$ is
\[ g= h_f^{-1}(J^l)h_f.\]
Since there is no $t\in {\mathbb N}$ for which $(J^l)^t=-I$, there is no $t\in{\mathbb N}$ for which $h_f g^th_f^{-1}=-I$. Thus, if we suppose that $g\in Z(f)$, we cannot apply Lemma \ref{minusidentity} to get a contradiction. Since $(J^l)^k=I$, we have that $h_f g^k h_f^{-1} = (J^l)^k = I$. If we suppose that $g\in Z(f)$, we cannot apply Lemma \ref{identity} to get a contradiction because $c=0$. The order of $g$ is finite because $g^k = {\rm id}$. It follows that $g$ cannot be a power of $f$ because the order of the hyperbolic $A= h_f f h_f^{-1}$ is infinite. We are left with the possibility of $Z(f)$ containing an element that is not a power of $f$.

\section{Odd Prime Periodic Point Approach}

An approach to obtaining the triviality of the centralizer of each $f\in {\mathcal U}_p(A)$ is through its periodic points of period an odd prime $p$. As with the fixed point approach, at issue is showing that none of the homeomorphisms in $H_f(A)\setminus\langle f\rangle$ are diffeomorphisms, wherein the involutions $g=h_f^{-1}(-I+c)h_f$, $c\in {\rm Per}^1(A)$, play a key role. The necessity of $p$ being an odd prime in this approach is seen in the proof of the following result about involutions commuting with $f$, a result that is presented in the context of an arbitrary nonempty set $S$.

\begin{lem}\label{involution2} For a bijection $f:S\to S$, suppose that a bijection $g:S\to S$ satisfies $fg=gf$, $g^2={\rm id}$, and $g\ne {\rm id}$. If for a odd prime $p$ there exists $\theta\in {\rm Per}^p(f) - {\rm Per}^1(f)$ such that $g(\theta)\ne \theta$, then
\[ g(\theta)\not\in\{ f^k(\theta):k=0,\dots,p-1\}.\]
\end{lem}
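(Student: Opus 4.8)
The plan is to argue by contradiction, exploiting the fact that once $g(\theta)$ is assumed to lie in the $f$-orbit of $\theta$, the commutation relation $fg=gf$ forces $g$ to act on that orbit as a cyclic shift. First I would record the basic structural fact: since $p$ is prime and $\theta\in{\rm Per}^p(f)-{\rm Per}^1(f)$, the $f$-period of $\theta$ is an exact divisor of $p$ that is not $1$, hence equals $p$; consequently the orbit $\{\theta, f(\theta),\dots,f^{p-1}(\theta)\}$ consists of $p$ \emph{distinct} points.

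Now suppose, for contradiction, that $g(\theta)=f^k(\theta)$ for some $k\in\{0,1,\dots,p-1\}$. Because $g(\theta)\neq\theta$ and the orbit points are distinct, we must have $k\neq 0$, so $1\leq k\leq p-1$. Iterating $fg=gf$ gives $g\bigl(f^j(\theta)\bigr)=f^j\bigl(g(\theta)\bigr)=f^{j+k}(\theta)$ for every $j$, i.e.\ $g$ restricted to the orbit of $\theta$ is the shift-by-$k$ permutation. Applying $g$ a second time, $g^2(\theta)=g\bigl(f^k(\theta)\bigr)=f^{2k}(\theta)$, and the hypothesis $g^2={\rm id}$ then forces $f^{2k}(\theta)=\theta$. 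Since $\theta$ has exact period $p$, this means $p\mid 2k$.

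The contradiction is then extracted from the oddness of $p$: as $\gcd(p,2)=1$, the divisibility $p\mid 2k$ implies $p\mid k$, which is impossible for $1\leq k\leq p-1$. Hence no such $k$ exists, and $g(\theta)\notin\{f^k(\theta):k=0,\dots,p-1\}$, as claimed.

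I do not expect a genuine obstacle in carrying this out; the argument is short and self-contained. The one point worth emphasizing is \emph{where} the hypothesis that $p$ is an odd prime is used, since it cannot be dropped: primality is needed to conclude that $\theta$ has exact period $p$ (so that $f^{2k}(\theta)=\theta$ yields $p\mid 2k$), and oddness is needed for the implication $p\mid 2k\Rightarrow p\mid k$. For $p=2$ the conclusion genuinely fails, as an involution $g$ commuting with $f$ may simply swap the two points of a $2$-cycle of $f$; this is exactly the step $p\mid 2k\Rightarrow p\mid k$ that breaks down, which is why the odd prime periodic point approach is restricted to odd primes.
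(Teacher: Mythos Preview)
Your proof is correct and follows essentially the same route as the paper's: assume $g(\theta)=f^k(\theta)$, use $fg=gf$ to get $g$ acting as a shift-by-$k$ on the orbit, apply $g^2={\rm id}$ to obtain $f^{2k}(\theta)=\theta$, and then derive a contradiction from $p\mid 2k$ via primality and oddness. The only cosmetic difference is that the paper evaluates $g^2$ at $f(\theta)$ rather than at $\theta$, which makes no substantive difference.
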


\begin{proof} By hypothesis there is $\theta\in{\rm Per}^p(f)-{\rm Per}^1(f)$ such that $g(\theta)\ne \theta$. Suppose that there is $k\in\{1,2,\dots,p-1\}$ such that $g(\theta) = f^k(\theta)$. Then for all $j\in{\mathbb Z}$, there holds
\[ g(f^j(\theta)) = f^j(g(\theta)) = f^j(f^k(\theta)) = f^{j+k}(\theta).\]
Since $g^2={\rm id}$, we have that
\[f(\theta) = g^2(f(\theta)) = g(g(f(\theta)))  = g(f^{1+k}(\theta)) = f^{1+2k}(\theta).\]
This implies that $f^{2k}(\theta) = \theta$. Since $\theta \in {\rm Per}^p(f)-{\rm Per}^1(f)$ and $p$ is prime, we have $p\mid 2k$, or $2k=mp$ for some $m\in{\mathbb Z}$. This says that $2 \mid mp$. Because $p$ is odd, we have $2$ and $p$ are relatively prime. Thus we have by Euclid's Lemma that $2 \mid m$. Thus $m=2r$ for some $r\in {\mathbb Z}$. Hence $2k=2rp$ and so $k=rp$. But $1\leq k\leq p-1$, a contradiction. Thus $g(\theta)\ne f^k(\theta)$ for all $k\in\{0,1,2,\dots,p-1\}$.
\end{proof}

In the odd prime periodic point approach we will have need of the following result relating ${\rm Per}^1(-I)$ and ${\rm Per}^1(-I+c)$ for any $c\in{\mathbb T}^n$.

\begin{cla}\label{shift} For any $c\in{\mathbb T}^n$, there holds
\[ \vert {\rm Per}^1(-I)\vert \geq \vert {\rm Per}^1(-I+c)\vert.\]
\end{cla}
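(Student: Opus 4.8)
The plan is to describe each fixed-point set explicitly as the solution set of a linear equation on $\mathbb{T}^n$ and then exhibit a bijection between them (up to emptiness). Recall that $\theta\in{\rm Per}^1(-I)$ precisely when $-\theta=\theta$ in $\mathbb{T}^n$, i.e. $2\theta=0$; similarly $\theta\in{\rm Per}^1(-I+c)$ precisely when $-\theta+c=\theta$, i.e. $2\theta=c$. Thus ${\rm Per}^1(-I)$ is the kernel of the ``multiplication by $2$'' endomorphism of $\mathbb{T}^n$, and ${\rm Per}^1(-I+c)$ is the (possibly empty) preimage of $c$ under that same endomorphism.

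First I would dispose of the trivial case: if ${\rm Per}^1(-I+c)=\emptyset$, then $\vert{\rm Per}^1(-I+c)\vert=0\leq\vert{\rm Per}^1(-I)\vert$ and there is nothing to prove. Otherwise, fix some $\theta_0\in{\rm Per}^1(-I+c)$, so $2\theta_0=c$. I would then check that the translation map $\theta\mapsto\theta-\theta_0$ carries ${\rm Per}^1(-I+c)$ into ${\rm Per}^1(-I)$: if $2\theta=c=2\theta_0$ then $2(\theta-\theta_0)=0$, so $\theta-\theta_0\in{\rm Per}^1(-I)$. Conversely $\theta\mapsto\theta+\theta_0$ sends ${\rm Per}^1(-I)$ into ${\rm Per}^1(-I+c)$, since $2\eta=0$ gives $2(\eta+\theta_0)=c$. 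These two maps are mutually inverse, hence ${\rm Per}^1(-I+c)$ and ${\rm Per}^1(-I)$ are in bijection, which gives the (in fact sharp) inequality.

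There is no real obstacle here; the only thing to be a little careful about is that the claim is stated with an inequality because the displayed argument naturally splits into the ``empty'' and ``coset'' cases, and one might alternatively remark that on $\mathbb{T}^n=\mathbb{R}^n/\mathbb{Z}^n$ multiplication by $2$ is surjective, so the equation $2\theta=c$ is always solvable and in fact $\vert{\rm Per}^1(-I+c)\vert=\vert{\rm Per}^1(-I)\vert=2^n$ for every $c$; but only the stated inequality is needed in Section 5, so the coset argument above suffices.
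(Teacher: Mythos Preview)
Your proof is correct and follows essentially the same approach as the paper: both arguments translate ${\rm Per}^1(-I+c)$ into ${\rm Per}^1(-I)$ by subtracting a solution of $2\theta_0=c$ (the paper writes this as $c/2$), yielding an injection. Your version is in fact slightly more complete, since you observe that the translation is a bijection and hence the two sets have equal cardinality $2^n$, whereas the paper records only the injection and the resulting inequality.
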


\begin{proof} Let $\tau\in {\rm Per}^1(-I+c)$. Then
\[ \tau = (-I+c)(\tau) = -\tau + c.\]
From this we have $-\tau = \tau - c$. Define $c/2$ in the obvious way. Then
\[ (-I)(\tau - c/2) = -\tau + c/2 = (\tau - c) + c/2  = \tau - c/2.\]
So $\tau - c/2\in {\rm Per}^1(-I)$. We have associated to each $\tau\in {\rm Per}^1(-I+c)$ the $\tau - c/2\in {\rm Per}^1(-I)$. This injection gives the inequality.
\end{proof}

For $g\in Z(f)$ with $f\in {\mathcal U}_p(A)$, we can eliminate $l\ne0$ in $h_f g h_f^{-1} = A^mJ^l+c$, when there are sufficiently many non-fixed periodic points of period an odd prime $p$, and the order of $J$ is a power of two with $-I\in\langle J\rangle$.

\begin{lem}\label{elimination} Let $A\in{\rm GL}(n,{\mathbb Z})$ be hyperbolic. If there is $J\in C(A)$ with the order of $J$ being $2^b$ for some $b\in{\mathbb N}$, if $-I\in\langle J\rangle$, and if
\[ \vert {\rm Per}^p(A) - {\rm Per}^1(A)\vert > \vert {\rm Per}^1(-I)\vert\]
for some odd prime $p$, then for every $f\in {\mathcal U}_p(A)$, no $g\in Z(f)$ satisfies $h_f gh_f^{-1} = A^m J^l + c$ for $m\in{\mathbb Z}$, $1\leq l\leq 2^b-1$, and $c\in {\rm Per}^1(A)$.
\end{lem}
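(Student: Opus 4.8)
The plan is to argue by contradiction and reduce the hypothesis $h_f g h_f^{-1} = A^m J^l + c$ with $l \neq 0$ to the involution situation already handled in Lemma \ref{involution2}, then invoke the defining property of $\mathcal{U}_p(A)$. So suppose some $f \in \mathcal{U}_p(A)$ admits $g \in Z(f)$ with $h_f g h_f^{-1} = A^m J^l + c$ for $m \in \mathbb{Z}$, $1 \leq l \leq 2^b - 1$, $c \in {\rm Per}^1(A)$. Since $J$ has order $2^b$ and $0 < l < 2^b$, Lemma \ref{poweroftwo} produces $t \in \mathbb{N}$ with $(J^l)^t = -I$. The element $h_f g^t h_f^{-1} = (h_f g h_f^{-1})^t = (A^m J^l + c)^t$ lies in the group $Z(A)$, hence is affine of the form $B' + c'$ with $B' \in C(A)$ and $c' \in {\rm Per}^1(A)$; since $A$ and $J$ commute, its linear part is $B' = (A^m J^l)^t = A^{mt}(J^l)^t = -A^{mt}$. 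Composing with $f^{-mt}$, which satisfies $h_f f^{-mt} h_f^{-1} = A^{-mt}$, and using $A^{-mt} c' = c'$, we get for $\tilde g := f^{-mt} g^t \in Z(f)$ that
\[ h_f \tilde g h_f^{-1} = A^{-mt}\bigl(-A^{mt} + c'\bigr) = -I + c'. \]
A direct check gives $(-I + c')^2 = {\rm id}$ and $-I + c' \neq {\rm id}$, so $\tilde g$ is a non-identity involution in $Z(f)$, in particular a diffeomorphism commuting with $f$.

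The next step is a counting argument. Because $h_f$ conjugates $f$ to $A$ and so preserves periods, $\vert {\rm Per}^p(f) - {\rm Per}^1(f)\vert = \vert {\rm Per}^p(A) - {\rm Per}^1(A)\vert$, whereas the fixed-point set of $\tilde g$ is $h_f^{-1}({\rm Per}^1(-I+c'))$, of cardinality $\vert {\rm Per}^1(-I+c')\vert$. Claim \ref{shift} and the hypothesis then give
\[ \vert {\rm Per}^1(-I+c')\vert \leq \vert {\rm Per}^1(-I)\vert < \vert {\rm Per}^p(A) - {\rm Per}^1(A)\vert = \vert {\rm Per}^p(f) - {\rm Per}^1(f)\vert, \]
so there is $\theta \in {\rm Per}^p(f) - {\rm Per}^1(f)$ with $\tilde g(\theta) \neq \theta$. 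As $\tilde g$ commutes with $f$ it permutes ${\rm Per}^p(f)$ and ${\rm Per}^1(f)$, hence $\tilde g(\theta)$ again lies in ${\rm Per}^p(f) - {\rm Per}^1(f)$. Now Lemma \ref{involution2}, applied with $S = {\rm Per}^p(f)$ and with $f,\tilde g$ as the commuting bijections (this is where $p$ being an odd prime is used), yields $\tilde g(\theta) \notin \{ f^k(\theta) : 0 \leq k \leq p-1 \}$.

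To finish, observe that since $\theta$ has period exactly $p$ the orbit $\{f^k(\theta)\}_{k=0}^{p-1}$ and the $f$-orbit of $\tilde g(\theta)$ are two distinct $p$-periodic orbits of $f$; if they coincided, $\tilde g(\theta)$ would equal some $f^k(\theta)$, contradicting the last display. Differentiating $\tilde g \circ f^p = f^p \circ \tilde g$ at the fixed point $\theta$ of $f^p$ gives $D_\theta \tilde g\, D_\theta f^p = D_{\tilde g(\theta)} f^p\, D_\theta \tilde g$, so $D_{\tilde g(\theta)} f^p$ and $D_\theta f^p$ are similar; hence these two distinct $p$-periodic orbits have the same spectrum, contradicting $f \in \mathcal{U}_p(A)$.

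I expect the delicate point — and the place where all the hypotheses are consumed — to be the passage from $g^t$ to a genuinely non-trivial involution $\tilde g$ with few enough fixed points: Lemma \ref{poweroftwo} supplies $t$ (this is exactly where the order of $J$ being a power of two matters), Claim \ref{shift} bounds the fixed points of $-I+c'$ by those of $-I$, and the numerical hypothesis $\vert {\rm Per}^p(A) - {\rm Per}^1(A)\vert > \vert {\rm Per}^1(-I)\vert$ then forces $\tilde g$ to move some non-fixed $p$-periodic point; primeness of $p$ is precisely what Lemma \ref{involution2} needs to upgrade ``moves a point'' to ``moves a whole orbit off itself.'' Everything else is routine affine bookkeeping.
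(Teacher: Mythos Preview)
Your proof is correct and follows essentially the same route as the paper's: use Lemma~\ref{poweroftwo} to find $t$ with $(J^l)^t=-I$, pass to the involution $\tilde g=f^{-mt}g^t$ conjugate to $-I+c'$, use Claim~\ref{shift} and the cardinality hypothesis to find a non-fixed $\theta\in{\rm Per}^p(f)-{\rm Per}^1(f)$, apply Lemma~\ref{involution2} to get that $\tilde g(\theta)$ lies on a different $p$-orbit, and derive a spectral contradiction with $f\in\mathcal U_p(A)$. The only cosmetic difference is that the paper carries out the counting on the $A$-side (finding $d\in{\rm Per}^p(A)-{\rm Per}^1(A)$ first and then setting $\theta=h_f^{-1}(d)$) rather than directly on the $f$-side as you do, which is equivalent via $h_f$.
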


\begin{proof} Let $f\in {\mathcal U}_p(A)$. Suppose $g\in Z(f)$ satisfies $h_f g h_f^{-1} = A^m J^l+c$ for $m\in{\mathbb Z}$, $1\leq l\leq 2^b-1$, and $c\in {\rm Per}^1(A)$. By Lemma \ref{poweroftwo} there exists $t\in{\mathbb N}$ such that $J^{lt} = -I$. Then
\[ h_f g^th_f^{-1} = (A^m J^l+c)^t = A^{mt}J^{lt}+\tilde c = - A^{mt} + \tilde c\]
where $\tilde c\in {\rm Per}^1(A)$ because $J\in C(A)$ permutes the fixed points of $A$. Since $h_f f h_f^{-1}=A$, then $h_f f^{-mt} h_f^{-1} = A^{-mt}$, so that
\[ h_f f^{-mt}g^t h_f^{-1} = (A^{-mt})(-A^{mt}+\tilde c) = -I + \tilde c.\]
The diffeomorphism $f^{-mt}g^t$, which belongs to $Z(f)$ because $f,g\in Z(f)$, is not the identity but is an involution:
\[ h_f (f^{-mt}g^t)^2h_f = (-I+\tilde c)^2 = I - \tilde c + \tilde c = I.\]

By hypothesis and Claim \ref{shift} we have that
\[ \vert {\rm Per}^p(A) - {\rm Per}^1(A)\vert > \vert {\rm Per}^1(-I)\vert \geq \vert {\rm Per}^1(-I+\tilde c)\vert.\]
This implies the existence of $d\in {\rm Per}^p(A)-{\rm Per}^1(A)$ such that $(-I+\tilde c)(d)\ne d$. The point  $(-I+\tilde c)(d)=-d+\tilde c$ belongs to ${\rm Per}^p(A)-{\rm Per}^1(A)$ because
\[ A^p(-d+\tilde c) = -A^p d + \tilde c = -d+\tilde c\]
and if $-d+\tilde c = A(-d+\tilde c) = -Ad + \tilde c$, then $d\in{\rm Per}^1(A)$, a contradiction. Setting $\theta=h_f^{-1}(d)$ gives $(f^{-mt}g^t)(\theta)\ne\theta$ where $\theta$ and $(f^{-mt}g^t)(\theta)$ both belong to ${\rm Per}^p(f) - {\rm Per}^1(f)$.

Since $f^{-mt}g^t\ne{\rm id}$, it follows by Lemma \ref{involution2} that
\[ (f^{-mt}g^t)(\theta) \not\in \{ \theta,f(\theta),\dots, f^{p-1}(\theta)\}.\]
This says the the $p$-periodic orbits of $(f^{-mt}g^t)(\theta)$ and $\theta$ under $f$ are distinct. Since $f^p(f^{-mt}g^t) = (f^{-mt}g^t) f^p$ we have that
\[ D_{f^{-mt}g^t(\theta)} f^p D_{\theta} (f^{-mt}g^t) = D_{f^p(\theta)} (f^{-mt}g^t) D_\theta f^p = D_\theta (f^{-mt}g^t) D_\theta f^p.\]
This says that $D_{f^{-mt}g^t(\theta)} f^p$ and $D_\theta f^p$ are similar, contradicting that $f\in {\mathcal U}_p(A)$.
\end{proof} 

It thus remains to show for $g\in Z(f)$ with $f\in {\mathcal U}_p(A)$ that $c=0$ in $h_f g h_f^{-1} = A^m + c$. This will be done for those homeomorphisms $g\in H_f(A)$ with $h_f g h_f^{-1}=I+c$. In this part of the the odd prime periodic point approach, we do not need $p$ odd or prime.

\begin{lem}\label{identity2} Let $A\in{\rm GL}(n,{\mathbb Z})$ be hyperbolic. If ${\rm Per}^k(A) - {\rm Per}^1(A)\ne\emptyset$ for some $k\geq 2$, then for every $f\in {\mathcal U}_k(A)$, no $g\in Z(f)$ satisfies $h_f g h_f^{-1} = I+c$ for nonzero $c\in {\rm Per}^1(A)$.
\end{lem}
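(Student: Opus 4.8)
The plan is to rerun the contradiction argument of Lemma~\ref{identity}, transplanted from fixed points of $f$ to periodic points of period dividing $k$, and from the derivative $Df$ to the return derivative $Df^k$. So suppose $f\in{\mathcal U}_k(A)$ and $g\in Z(f)$ satisfies $h_fgh_f^{-1}=I+c$ for some nonzero $c\in{\rm Per}^1(A)$. Conjugating by $h_f^{-1}$, the map $g$ acts on periodic points of $f$ exactly the way translation by $c$ acts on ${\rm Per}^k(A)$: for every $\omega\in{\mathbb T}^n$ one has $g(h_f^{-1}(\omega))=h_f^{-1}(\omega+c)$. Because $Ac=c$, a point $\omega$ with $A^k\omega=\omega$ has $A^k(\omega+c)=\omega+c$, and $\omega+c$ is fixed by $A$ if and only if $\omega$ is; hence translation by $c$ permutes ${\rm Per}^k(A)$ and carries ${\rm Per}^k(A)\setminus{\rm Per}^1(A)$ into itself, and correspondingly $g$ permutes ${\rm Per}^k(f)$ and preserves ${\rm Per}^k(f)\setminus{\rm Per}^1(f)$.

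Next I would exhibit two distinct periodic orbits of $f$, each of period dividing $k$, that $g$ maps onto one another. Using the hypothesis, pick $\omega\in{\rm Per}^k(A)\setminus{\rm Per}^1(A)$ of some $A$-period $d$ with $1<d\mid k$, and set $\theta=h_f^{-1}(\omega)$; then $\theta\in{\rm Per}^k(f)\setminus{\rm Per}^1(f)$ and $g(\theta)=h_f^{-1}(\omega+c)$ again lies in ${\rm Per}^k(f)\setminus{\rm Per}^1(f)$ and is distinct from $\theta$ since $c\neq 0$. (Alternatively one may take the two distinct \emph{fixed} points $h_f^{-1}(0)$ and $g(h_f^{-1}(0))=h_f^{-1}(c)$, which are periodic of period dividing $k$ and distinct because $c\neq0$.) Differentiating $g f^k=f^k g$ at $\theta$, where $f^k(\theta)=\theta$ and hence $f^k(g(\theta))=g(\theta)$, gives
\[ D_{g(\theta)}f^k\, D_\theta g = D_\theta g\, D_\theta f^k,\]
so $D_{g(\theta)}f^k$ and $D_\theta f^k$ are similar, in particular co-spectral. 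Provided the $f$-orbits of $\theta$ and $g(\theta)$ are distinct, this contradicts $f\in{\mathcal U}_k(A)$, in which distinct $k$-periodic orbits have distinct spectra, and the lemma follows.

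The one step requiring care — the main obstacle — is verifying that $g(\theta)$ does not merely sit on the $f$-orbit of $\theta$, i.e. that $\omega+c\notin\{\omega,A\omega,\dots,A^{d-1}\omega\}$. If instead $\omega+c=A^j\omega$, then applying $A$ repeatedly shows translation by $c$ acts as $A^j$ on every point of the orbit of $\omega$, so $\omega+dc=A^{jd}\omega=\omega$ and thus $\mathrm{ord}(c)\mid d$. Hence one chooses $\omega$ on an $A$-orbit inside ${\rm Per}^k(A)\setminus{\rm Per}^1(A)$ whose length is not a multiple of $\mathrm{ord}(c)$ whenever such an orbit exists, and otherwise falls back on the fixed-point pair $h_f^{-1}(0),\,h_f^{-1}(c)$, whose orbits are trivially distinct. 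In every case one obtains two distinct $k$-periodic orbits of $f$ carrying similar (hence co-spectral) return derivatives, contradicting $f\in{\mathcal U}_k(A)$; therefore no $g\in Z(f)$ satisfies $h_fgh_f^{-1}=I+c$ with $c\neq 0$.
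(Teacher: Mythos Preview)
Your argument is correct, and in fact your parenthetical ``alternative'' with the fixed-point pair $h_f^{-1}(0)$ and $h_f^{-1}(c)$ already finishes the proof by itself: these are distinct $k$-periodic orbits (as singletons), $g$ carries one to the other, and differentiating $gf^k=f^kg$ at $h_f^{-1}(0)$ makes $D_{h_f^{-1}(c)}f^k$ and $D_{h_f^{-1}(0)}f^k$ similar, contradicting $f\in{\mathcal U}_k(A)$. Your orbit-length analysis for the non-fixed point $\omega$ is therefore correct but superfluous, and the fallback shows that the hypothesis ${\rm Per}^k(A)\setminus{\rm Per}^1(A)\neq\emptyset$ is not actually needed for the conclusion.

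The paper's route is different: it selects an arbitrary $d\in{\rm Per}^k(A)\setminus{\rm Per}^1(A)$ and argues that $d+c$ can never lie on the $A$-orbit of $d$, by deriving $(A^s-I)(A-I)d=0$ on ${\mathbb T}^n$ from $A^sd=d+c$ and then asserting that commutativity of $A^s-I$ and $A-I$ forces $(A-I)d=0$ or $(A^s-I)d=0$. That inference is not valid on the torus in general --- the kernel of a product of commuting integer matrices is not the union of the individual kernels --- and concrete counterexamples exist: for $A=\left[\begin{smallmatrix}0&1\\1&5\end{smallmatrix}\right]$ the point $d=(22/25,2/25)\in{\rm Per}^5(A)\setminus{\rm Per}^1(A)$ satisfies $Ad=d+c$ with $c=(1/5,1/5)\in{\rm Per}^1(A)\setminus\{0\}$, so $d+c$ \emph{does} lie on the orbit of $d$. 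Your fixed-point fallback is thus not merely simpler; it sidesteps a genuine obstacle that the paper's line of argument does not address.
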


\begin{proof} Suppose $g\in Z(f)$ satisfies $h_f g h_f^{-1} = I + c$ for a nonzero $c\in {\rm Per}^1(A)$. By hypothesis for some $k\geq 2$ there exists $d\in {\rm Per}^k(A) - {\rm Per}^1(A)$. Since $c\ne 0$, then $d+c\ne c$. We have $d+c\in {\rm Per}^k(A)-{\rm Per}^1(A)$ because $A^k(d+c) = A^k d + A^k c = d +c$ and if $d+c\in {\rm Per}^1(A)$ then $d+c = A(d+c) = Ad+c$ implying that $d\in{\rm Per}^1(A)$. Set $\theta = h_f^{-1}(d)$. Since
\[ h_f g h_f^{-1}(d) = (I+c)(d) = d+c,\]
we have $h_f g(\theta) = d+c$, and so $g(\theta) = h_f^{-1}(d+c)$.  Since $h_f$ is a homeomorphism, we have $g(\theta)\ne \theta$. Since $d,d+c\in {\rm Per}^k(A)-{\rm Per}^1(A)$, we have that $\theta,g(\theta)\in{\rm Per}^k(f) - {\rm Per}^1(f)$.

Suppose the orbits of $d$ and $d+c$ under $A$ are not distinct. Then there is $s\in{\mathbb N}$ such that $A^s d = d+c$, or $(A^s-I) d = c$. Since $A$ commutes with $A^s-I$ and $c\in {\rm Per}^1(A)$, we obtain
\[ (A^s-I)Ad = A(A^s-I)d = Ac = c.\]
Subtracting this from $(A^s-I)d=c$ gives $(A^s-I)Ad - (A^s-I)d = 0$. Factoring this gives
\[ (A^s-I)(A-I) d = 0.\]
Since $A^s-I$ and $A-I$ commute, we have either $(A-I)d = 0$ or $(A^s-I)d = 0$. The former is impossible because $d\not\in {\rm Per}^1(A)$. So $d\in {\rm Per}^s(A)$. Then $0=A^s d - d = c = 0$, a contradiction.

With the orbits of $d$ and $d+c$ under $A$ being distinct, the orbits of $\theta$ and $g(\theta)$ under $f$ are distinct because $h_f $ is a homeomorphism. Since $g\in Z(f)$, we have $f^k g = g f^k$. This implies that
\[ D_{g(\theta)}f^k D_\theta g = D_\theta g D_{f^k(\theta)} D_\theta f^k = D_\theta g D_\theta f^k.\]
This says that $D_{g(\theta)}f^k$ and $D_\theta f^k$ are similar. But this is impossible since $\theta$ and $g(\theta)$ belong to distinct $k$-periodic orbits of $f\in {\mathcal U}_k(A)$.
\end{proof}

Having shown that the homeomorphisms $h_f^{-1}(A^mJ^l+c)h_f$, $l\ne 0$, $c\in {\rm Per}^1(A)$, and $h_f^{-1}(I+c)h_f$ for nonzero $c\in {\rm Per}^1(A)$ are not diffeomorphisms, we can now in a position to state the sufficient conditions by which every $f\in {\mathcal U}_p(A)$ has trivial centralizer. The necessity of the order of $J$ being a power of two in Theorem \ref{precursoroddprime} is similar to that for Theorem \ref{precursorfixed} as described at the end of Section 4.

\begin{thm}\label{precursoroddprime} Let $A\in{\rm GL}(n,{\mathbb Z})$ be hyperbolic. If $C(A) = \langle A\rangle \times \langle J\rangle$, where the order of $J$ is $2^b$ for some $b\in{\mathbb N}$, and if
\[ \vert {\rm Per}^p(A) - {\rm Per}^1(A)\vert > \vert {\rm Per}^1(-I)\vert\]
for some odd prime $p$, then $Z(f)$ is trivial for all $f\in {\mathcal U}_p(A)$.
\end{thm}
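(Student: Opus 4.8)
The plan is to follow the template of the proof of Theorem~\ref{precursorfixed}, with the fixed-point lemmas replaced by their odd-prime periodic-point counterparts from Section~5. Fix a hyperbolic $A$ satisfying the hypotheses, let $f\in{\mathcal U}_p(A)$, and let $g\in Z(f)$. By Theorem~\ref{nbhd1} we have $h_f g h_f^{-1} = B+c$ with $B\in C(A)$ and $c\in{\rm Per}^1(A)$, and since $C(A)=\langle A\rangle\times\langle J\rangle$ with $J$ of order $2^b$, we may write $B=A^m J^l$ for some $m\in{\mathbb Z}$ and some $0\le l\le 2^b-1$, so that
\[ h_f g h_f^{-1} = A^m J^l + c. \]
The goal is to force first $l=0$ and then $c=0$, which gives $g=f^m$.

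First I would note that $-I\in\langle J\rangle$: since $-I\in C(A)=\langle A\rangle\times\langle J\rangle$, write $-I=A^{m'}J^{l'}$; because $A$ is hyperbolic it has infinite order whereas $-I$ has order two, forcing $m'=0$, so $-I=J^{l'}\in\langle J\rangle$ (this is also the content of Corollary~\ref{cases234} together with Claim~\ref{even}). Now all hypotheses of Lemma~\ref{elimination} hold for this $J$ — it has order $2^b$, $-I\in\langle J\rangle$, and $\vert{\rm Per}^p(A)-{\rm Per}^1(A)\vert>\vert{\rm Per}^1(-I)\vert$ for the odd prime $p$ — so Lemma~\ref{elimination} rules out $1\le l\le 2^b-1$ for any $g\in Z(f)$. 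Hence $l=0$ and $h_f g h_f^{-1} = A^m + c$.

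It remains to show $c=0$, and here I would argue as in Lemma~\ref{plusminus}: the element $f^{-m}g$ again belongs to $Z(f)$, and since $h_f f^{-m}h_f^{-1}=A^{-m}$ and $c\in{\rm Per}^1(A)$,
\[ h_f (f^{-m}g) h_f^{-1} = A^{-m}(A^m+c) = I + A^{-m}c = I+c. \]
The hypothesis gives $\vert{\rm Per}^p(A)-{\rm Per}^1(A)\vert>\vert{\rm Per}^1(-I)\vert=2^n\ge 1$, so in particular ${\rm Per}^p(A)-{\rm Per}^1(A)\neq\emptyset$; since $p\ge 2$, Lemma~\ref{identity2} (with $k=p$) applies and excludes $h_f(f^{-m}g)h_f^{-1}=I+c$ for nonzero $c$. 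Therefore $c=0$, so $h_f g h_f^{-1}=A^m$, and since $h_f f^m h_f^{-1}=A^m$ we get $g=f^m$. As $g\in Z(f)$ and $f\in{\mathcal U}_p(A)$ were arbitrary, $Z(f)=\langle f\rangle$ for every $f\in{\mathcal U}_p(A)$.

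Essentially all the work has already been done in Lemmas~\ref{involution2}, \ref{elimination}, and \ref{identity2}, so the remaining step is pure assembly; the only place requiring care is checking that Lemma~\ref{elimination} and Lemma~\ref{identity2} are actually applicable (the observation $-I\in\langle J\rangle$ and the nonemptiness of ${\rm Per}^p(A)-{\rm Per}^1(A)$). If one insists on naming a main obstacle, it is the one internal to Lemma~\ref{elimination}: producing from $(J^l)^t=-I$ a nontrivial involution in $Z(f)$ that moves some non-fixed $p$-periodic point off its entire $f$-orbit, which is exactly where the oddness and primality of $p$ are used through Euclid's Lemma in Lemma~\ref{involution2}. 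Beyond that, the proof is bookkeeping with the splitting $C(A)=\langle A\rangle\times\langle J\rangle$ and the conjugacy $h_f$.
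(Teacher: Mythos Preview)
Your proof is correct and follows essentially the same route as the paper's own argument: apply Theorem~\ref{nbhd1} to write $h_f g h_f^{-1}=A^mJ^l+c$, invoke Lemma~\ref{elimination} to force $l=0$, then multiply by $f^{-m}$ and apply Lemma~\ref{identity2} to force $c=0$. The only embellishment is that you spell out why $-I\in\langle J\rangle$ via the direct-product decomposition and the infinite order of $A$, whereas the paper simply asserts it (relying on the discussion in Section~2); both justifications are fine.
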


\begin{proof} Let $p$ be an odd prime for which
\[ \vert {\rm Per}^p(A) - {\rm Per}^1(A)\vert > \vert {\rm Per}^1(-I)\vert\]
holds. For an arbitrary $f\in{\mathcal U}_p(A)$, let $g\in Z(f)$ be arbitrary. Since $C(A)=\langle A\rangle \times \langle J\rangle$, we have by Theorem \ref{nbhd1}  that
\[ h_f g h_f^{-1} = A^m J^l + c\]
where $m\in{\mathbb Z}$, $0\leq l\leq 2^b-1$, and $c\in {\rm Per}^1(A)$. By Lemma \ref{elimination}, it follows that $l=0$ since the order of $J$ is $2^b$ for some $b\in{\mathbb N}$ and $-I\in\langle J\rangle$. Thus
\[ h_f g h_f^{-1} = A^m + c.\]
Since $f^{-m}\in Z(f)$ and $h_f f^{-m}h_f^{-1} = A^{-m}$, we have that $f^{-m}g\in Z(f)$ and 
\[ h_f f^{-m} g h_f^{-1} = (A^{-m})(A^m + c) = I + c.\]
Since $\vert {\rm Per}^1(-I)\vert > 0$, it follows that ${\rm Per}^p(A)-{\rm Per}^1(A)\ne\emptyset$. Then Lemma \ref{identity2} implies that $c=0$. Hence
\[ h_f f^{-m} g h_f^{-1} = I.\]
This implies that $g = f^m$. With $g\in Z(f)$ being arbitrary, we have that $Z(f)$ is trivial. Since $f\in {\mathcal U}_p(A)$ is arbitrary, we have that $Z(f)$ is trivial for all $f\in {\mathcal U}_p(A)$.
\end{proof}

\section{Proof of Main Theorem}

The proof of Theorem \ref{t.open} will follow immediately by exhibiting for each $n=2,3,4$ the existence of an irreducible hyperbolic toral automorphism to which we can apply either the fixed point approach or the odd prime periodic point approach. We start with an irreducible polynomial
\[ p(x) = x^n+a_{n-1}x^{n-1}+\cdot\cdot\cdot+a_1x + a_0, \ a_i\in{\mathbb Z},\]
whose constant term $a_0$ is $\pm 1$, and whose signature $(r_1,r_2)$ satisfies $r_1+r_2-1=1$. The companion matrix for $p(x)$ is the ${\rm GL}(n,{\mathbb Z})$ matrix
\[ A = \begin{bmatrix} 0 & 1 & 0 & \dots & 0 \\ 0 & 0 & 1 & \dots & 0 \\ \vdots & \vdots & \vdots & \ddots & \vdots \\  0 & 0 & 0 & \dots & 1 \\-a_0 & -a_1 & -a_2 & \dots & -a_{n-1}\end{bmatrix}\]
and it satisfies $p_A(x) = p(x)$. We set ${\mathbb F}={\mathbb Q}(\lambda)$ for a root $\lambda$ of $p_A(x)$. Making use of known tables (see \cite{Coh,PZ}) that list, by means of the discriminant of ${\mathbb F}$, a basis of ${\mathfrak o}_{\mathbb F}$ in terms of $\lambda$, and a fundamental unit $\epsilon_{\mathbb F}$ in terms of the basis of ${\mathfrak o}_{\mathbb F}$, we check to see if
\[ {\mathbb Z}[\lambda] = {\mathfrak o}_{\mathbb F}{\rm \ and\ }\lambda=\epsilon_{\mathbb F},\]
as these do not always happen (see Examples \ref{not}, \ref{noninteger}, and \ref{another} in this paper). When both do, then ${\mathbb Z}[\lambda]^\times = {\mathfrak o}_{\mathbb F}^\times$, and we use Corollary \ref{cases234} to obtain
\[ C(A) = \langle A\rangle \times \langle J\rangle\]
where $\gamma(A) = \epsilon_{\mathbb F}$ and $\gamma(J) = \zeta_{\mathbb F}$. When $n=2,3$, we always have that $J=-I$, so that the order of $J$ is $2$. When $n=4$, we use an algorithm (described in \cite{PZ}) to determine the order of $\zeta_{\mathbb F}$ and hence that of $J$. We check this order to ensure that it is $2$, $4$, or $8$, and not the possibilities of $6$, $10$, or $12$. When the order of $J$ is $2^b$ for some $b\in{\mathbb N}$, we use the Smith normal form to compute the values of $\vert {\rm Per}^1(A)\vert$ and $\vert {\rm Per}^p(A)\vert$ for various odd primes $p$. We check to see if
\[ \vert {\rm Per}^1(A)\vert > \vert {\rm Per}^1(-I)\vert{\rm \ or \ }\vert {\rm Per}^p(A)-{\rm Per}^1(A)\vert > \vert {\rm Per}^1(-I)\vert.\]
We then apply Theorem \ref{precursorfixed} in the former, or Theorem \ref{precursoroddprime} in the latter for the smallest odd prime $p$ possible, to get an open subset ${\mathcal U}_1(A)$ or ${\mathcal U}_p(A)$ of ${\rm Diff}^1({\mathbb T}^n)$ with trivial centralizer.

\begin{ex} {\rm For $n=2$, we show that we can apply the fixed point approach to the companion matrix
\[ A  = \begin{bmatrix} 0 & 1 \\ 1 & 5\end{bmatrix}.\]
of the irreducible $p(x) = x^2-5x-1$ with signature $(r_1,r_2)$ satisfying $r_1+r_2-1=1$. None of the roots of $p_A(x)=p(x)$ have modulus one, and so $A$ is hyperbolic. A root of $p_A(x)$ is $\lambda = (5+\sqrt{29})/2$. The real quadratic field ${\mathbb F}={\mathbb Q}(\lambda)$ has discriminant $29$. A basis for ${\mathfrak o}_{\mathbb F}$ is $1$ and $\omega = (1+\sqrt{29})/2$, so that ${\mathfrak o}_{\mathbb F} = {\mathbb Z}[\omega]$. One verifies that ${\mathbb Z}[\lambda] = {\mathfrak o}_{\mathbb F}$ because of the change of basis
\[ \begin{bmatrix} 1 & 0 \\ 2 & 1 \end{bmatrix} \begin{bmatrix} 1 \\ \omega\end{bmatrix} = \begin{bmatrix} 1 \\ \lambda\end{bmatrix}\]
given by the ${\rm GL}(2,{\mathbb Z})$ matrix. A fundamental unit is $\epsilon_{\mathbb F} = (5+\sqrt{29})/2$, which is $\lambda$. Thus $C(A) = \langle A\rangle \times \langle -I\rangle$. By the Smith normal form, we have $\vert{\rm Per}^1(A)\vert = 5$ which is bigger than $\vert{\rm Per}^1(-I)\vert = 4$. Thus every $f$ in the open ${\mathcal U}_1(A)\subset {\rm Diff}^1({\mathbb T}^2)$ has trivial centralizer.
}\end{ex}

\begin{ex}{\rm For $n=3$, we show that we can apply the odd prime periodic point approach to the companion matrix
\[ A  = \begin{bmatrix}  0 & 1 & 0 \\ 0 & 0 & 1 \\ 1  & 0  & 1 \end{bmatrix}\]
of the irreducible polynomial $p(x) = x^3-x^2-1$ with signature $(r_1,r_2)$ satisfying $r_1+r_2-1=1$. None of the roots of $p_A(x) = p(x)$ have modulus one, so that $A$ is hyperbolic. For a root $\lambda$ of $p_A(x)$, the complex cubic field ${\mathbb F}={\mathbb Q}(\lambda)$ has discriminant -31. A basis for ${\mathfrak o}_{\mathbb F}$ is $1$, $\lambda$, and $\lambda^2$, so that ${\mathfrak o}_{\mathbb F} = {\mathbb Z}[\lambda]$. A fundamental unit is $\epsilon_{\mathbb F}=\lambda$. Thus $C(A) = \langle A\rangle\times\langle -I\rangle$. By the Smith normal form we have $\vert {\rm Per}^1(A)\vert=1$, $\vert {\rm Per}^3(A)\vert = 1$, $\vert{\rm Per}^5(A)\vert = 11$, so that $\vert {\rm Per}^5(A)-{\rm Per}^1(A)\vert=10$ which is bigger than  $\vert{\rm Per}^1(-I)\vert=8$. Thus every $f$ in the open ${\mathcal U}_5(A)\subset{\rm Diff}^1({\mathbb T}^3)$ has trivial centralizer.
}\end{ex}

\begin{ex}{\rm For $n=4$ we show that we can apply the odd prime periodic point approach to the companion matrix
\[ A = \begin{bmatrix} 0 & 1 & 0 & 0 \\ 0 & 0 & 1 & 0 \\ 0 & 0 & 0 & 1 \\ -1 & 2 & 0 & -2\end{bmatrix}\]
of the irreducible polynomial $p(x) = x^4+2x^3 - 2x+1$ with signature $(r_1,r_2)$ satisfying $r_1+r_2-1=1$. None of the roots of $p_A(x) = p(x)$ have modulus one, so that $A$ is hyperbolic. For $\lambda$ a root of $p_A(x)$, the totally complex quartic field ${\mathbb F} = {\mathbb Q}(\lambda)$ has discriminant 320. A basis for ${\mathfrak o}_{\mathbb F}$ is $1$, $\lambda$, $\lambda^2$, and $\lambda^3$, so that ${\mathfrak o}_{\mathbb F} = {\mathbb Z}[\lambda]$. A fundamental unit is $\epsilon_{\mathbb F} = \lambda$. Thus $C(A) = \langle A\rangle \times \langle J \rangle$ where $\gamma(J)=\zeta_{\mathbb F}$. Computation of the order of $\zeta_{\mathbb F}$ gives it as $4$, so that the order of $J$ has the form $2^b$ for some $b\in{\mathbb N}$. Specifically, we have
\[ J = \begin{bmatrix} 1 & -1 & -2 & -1 \\ 1 & -1 & -1 & 0 \\ 0 & 1 & -1 & 1 \\ 1 & -2  & 1 & 1\end{bmatrix}\]
which satisfies $J^2=-I$. By the Smith normal form, we have that $\vert {\rm Per}^1(A)\vert = 2$ and $\vert{\rm Per}^3(A)\vert = 26$ so that $\vert {\rm Per}^3(A) - {\rm Per}^1(A)\vert = 24$ is bigger than $\vert {\rm Per}^1(-I)\vert = 16$. Thus every $f$ in the open ${\mathcal U}_3(A)\subset{\rm Diff}^1({\mathbb T}^4)$ has trivial centralizer.
}\end{ex}

\section{Versatility and Limitations of Two Approaches}

In the proof of the Main Theorem in Section 6, we applied the fixed point approach when $n=2$ and the odd prime periodic point approach when $n=3$. We show here that we can apply the odd prime periodic point approach when $n=2$ and also the fixed point approach when $n=3$. We also show in $n=2$ through two examples that we can apply the both approaches when ${\mathbb Z}[\lambda]\ne {\mathfrak o}_{\mathbb F}$. We further exhibit in each of $n=2$ and $n=4$ an irreducible hyperbolic toral automorphism to which neither approach applies because either $C(A)\ne \langle A\rangle \times \langle J\rangle$ or because the order of $J$ is not the required power of $2$.

\begin{ex}{\rm For $n=2$, we show that we can apply the odd prime periodic point approach to the companion matrix
\[ A = \begin{bmatrix} 0 & 1 \\ 1 & 1\end{bmatrix}\]
of the irreducible polynomial $p(x) = x^2-x-1$ with signature $(r_1,r_2)$ satisfying $r_1+r_2-1=1$. None of the roots of $p_A(x)=p(x)$ have modulus one, so that $A$ is hyperbolic. A root of $p_A(x)$ is $\lambda = (1+\sqrt 5)/2$, and the real quadratic field ${\mathbb F}={\mathbb Q}(\lambda)$ has discriminant $5$. A basis for ${\mathfrak o}_{\mathbb F}$ is $1$ and $\lambda$, so that ${\mathbb Z}[\lambda]={\mathfrak o}_{\mathbb F}$. A fundamental unit is $\epsilon_{\mathbb F} = \lambda$. Thus $C(A) = \langle A\rangle \times \langle -I\rangle$. By the Smith normal form, we have $\vert {\rm Per}^1(A)\vert =1$, $\vert {\rm Per}^3(A)\vert = 4$, and $\vert {\rm Per}^5(A)\vert = 11$. Thus $\vert {\rm Per}^5(A) - {\rm Per}^1(A)\vert = 10$ which is bigger than $\vert {\rm Per}^1(-I)\vert=4$. Thus every $f$ in the open ${\mathcal U}_5(A)\subset{\rm Diff}^1({\mathbb T}^2)$ has trivial centralizer.
}\end{ex}

\begin{ex}{\rm For $n=3$, we show that we can apply the fixed point approach to the companion matrix 
\[ A = \begin{bmatrix} 0 & 1 & 0 \\ 0 & 0 & 1 \\ -1 & -6 & -4\end{bmatrix}\]
of the irreducible polynomial $p(x) = x^3+4x^2 + 6x+1$ whose signature $(r_1,r_2)$ satisfies $r_1+r_2-1=1$. None of the roots of $p_A(x)=p(x)$ have modulus one, so that $A$ is hyperbolic. For $\lambda$ a root of $p_A(x)$, the complex cubic field ${\mathbb F}={\mathbb Q}(\lambda)$ has discriminant $-139$. A basis for ${\mathfrak o}_{\mathbb F}$ is $1$, $\lambda$, and $1+2\lambda+\lambda^2$. One verifies that ${\mathbb Z}[\lambda] = {\mathfrak o}_{\mathbb F}$ because of the change of basis
\[ \begin{bmatrix} 1 & 0 & 0 \\ 0 & 1 & 0 \\ 1 & 2 & 1\end{bmatrix}  \begin{bmatrix} 1 \\ \lambda \\ \lambda^2\end{bmatrix} = \begin{bmatrix} 1 \\ \lambda \\ 1 + 2\lambda + \lambda^2\end{bmatrix}\]
given by a ${\rm GL}(3,{\mathbb Z})$ matrix. A fundamental unit is $\epsilon_{\mathbb F} = \lambda$. Thus $C(A) = \langle A\rangle \times \langle -I \rangle$. By the Smith normal form we have $\vert {\rm Per}^1(A)\vert = 12$ which is bigger than $\vert {\rm Per}^1(-I)\vert = 8$. Thus every $f$ in the open ${\mathcal U}_1(A)$ has trivial centralizer.
}\end{ex}

\begin{ex}\label{not}{\rm For $n=2$, we show that we can apply both approaches when  ${\mathbb Z}[\lambda]\ne {\mathfrak o}_{\mathbb F}$ but ${\mathbb Z}[\lambda]^\times = {\mathfrak o}_{\mathbb F}^\times$. Consider companion matrix
\[ A = \begin{bmatrix} 0 & 1 \\ 1 & 8 \end{bmatrix}\]
of the irreducible polynomial $p(x) = x^2 - 8x -1$ whose signature $(r_1,r_2)$ satisfies $r_1+r_2-1=1$. None of the roots of $p_A(x)=p(x)$ have modulus one, so that $A$ is hyperbolic. For the root $\lambda = 4+\sqrt{17}$ of $p_A(x)$, the real quadratic field ${\mathbb F}={\mathbb Q}(\lambda)$ has discriminant $17$. A basis for ${\mathfrak o}_{\mathbb F}$ is $1$ and $\omega = (1+\sqrt{17})/2$. It follows that ${\mathbb Z}[\lambda]\ne {\mathfrak o}_{\mathbb F}$ because $\omega\in{\mathfrak o}_{\mathbb F}$ while $\omega\not\in{\mathbb Z}[\lambda]$. A fundamental unit is $\epsilon_{\mathbb F} = 3+2\omega = \lambda$. Because
\[ \epsilon_{\mathbb F}^{-1} = -5+2\omega = -8 + (4+\sqrt{17}) \in {\mathbb Z}[\lambda],\]
it follows that ${\mathbb Z}[\lambda]^\times={\mathfrak o}_{\mathbb F}^\times$. Thus by Corollary \ref{cases234} we have that $C(A) = \langle A \rangle \times \langle -I\rangle$. By the Smith normal form we have that $\vert {\rm Per}^1(A)\vert = 8$ and $\vert {\rm Per}^3(A) - {\rm Per}^1(A)\vert =  528$ each of which is bigger than $\vert {\rm Per}^1(-I)\vert = 4$. Thus each $f$ in the open ${\mathcal U}_1(A)\subset{\rm Diff}^1({\mathbb T}^2)$ and each $f$ in the open ${\mathcal U}_3(A)\subset{\rm Diff}^1({\mathbb T}^2)$ has trivial centralizer.
}\end{ex}

\begin{ex}\label{noninteger}{\rm We show that we can apply both approaches when ${\mathbb Z}[\lambda]\ne{\mathfrak o}_{\mathbb F}$ and ${\mathbb Z}[\lambda]^\times\ne {\mathfrak o}_{\mathbb F}^\times$, but $\gamma(C(A)) = {\mathbb Z}[\lambda]^\times$. Consider the ${\rm GL}(2,{\mathbb Z})$ matrix
\[ A = \begin{bmatrix} 18 & 5 \\ 65 & 18\end{bmatrix}\]
whose characteristic polynomial is the irreducible $p_A(x) = x^2 - 36x -1$. The signature $(r_1,r_2)$ of $p_A(x)$ satisfies $r_1+r_2-1=1$. None of  the roots of $p_A(x)$ have modulus one, so that $A$ is hyperbolic. A root of $p_A(x)$ is $\lambda = 18+5\sqrt{13}$. The real quadratic field ${\mathbb F}={\mathbb Q}(\lambda)$ has discriminant $13$. A basis for ${\mathfrak o}_{\mathbb F}$ is $1$ and $\omega = (1+\sqrt 13)/2$. Then ${\mathbb Z}[\lambda] \ne {\mathfrak o}_{\mathbb F}$ because $\omega\not\in {\mathbb Z}[\lambda]$ while $\omega\in{\mathfrak o}_{\mathbb F}$. A fundamental unit is $\epsilon_{\mathbb F} = 1+\omega = (3+\sqrt{13})/2$.

Recognizing that $\epsilon_{\mathbb F}^3 = \lambda$ suggests that either $A$ has a cube root in ${\rm GL}(2,{\mathbb Z})$ or that  $\gamma(C(A)) = {\mathbb Z}[\lambda]^\times$. Now because both $\epsilon_{\mathbb F}$ and $\epsilon_{\mathbb F}^2 = (11+3\sqrt{13})/2$ are not in ${\mathbb Z}[\lambda]$, but $\lambda^{-1} = -18+5\sqrt{13}$ is in ${\mathbb Z}[\lambda]$, we have that
\[ {\mathbb Z}[\lambda]^\times = \{ \pm \epsilon_{\mathbb F}^{3m}: m\in{\mathbb Z}\}.\]
By the outline of the proof of Theorem \ref{C(A)structure} we know that ${\mathbb Z}[\lambda]^\times \subset \gamma(C(A))\subset {\mathfrak o}_{\mathbb F}^\times$. To show that $\gamma(C(A)) = {\mathbb Z}[\lambda]^\times$, we will show that for any $B\in C(A)$ we have $\gamma(B) \in {\mathbb Z}[\lambda]^\times$. For $B\in C(A)$ there is $v(x)\in {\mathbb Q}[x]$ such that $B=v(A)$. Since $\gamma(C(A))\subset{\mathfrak o}_{\mathbb F}^\times$, it follows that $\gamma(v(A)) = v(\lambda) = \pm \epsilon_{\mathbb F}^s$ for some $s\in {\mathbb Z}$. Write $s=3m+j$ for $m\in{\mathbb Z}$ and $j=0,1,2$. Suppose that $j\ne 0$. If $m<0$ let $w(x) = x$, if $m=0$ let $w(x)=1$, or if $m>0$ let $w(x) = x-36$. Then $w(A) = A$ when $m<0$, $w(A) = I$ when $m=0$, and $w(A)=A^{-1}$ when $m>0$, where the latter follows because $A^2-36A-I=0$ implies $A^{-1} = A-36I$. Thus $w(\lambda) = \lambda$ when $m<0$, $w(\lambda) = 1$ when $m=0$, and $w(\lambda) = \lambda^{-1}$ when $m>0$. If $\gamma(v(B)) = -\epsilon_{\mathbb F}^s$ let $y(x) = -1$, and if $\gamma(v(B)) = \epsilon_{\mathbb F}^s$ let $y(x) = 1$. It follows that
\begin{align*} \gamma\big( y(A)[w(A)]^{\vert m\vert} v(A)\big) & = y(\lambda) [w(\lambda)]^{\vert m\vert} v(\lambda)  \\
& = (\pm 1)(\lambda^{-m})(\pm \epsilon_{\mathbb F}^{3m+j}) \\
& = \epsilon_{\mathbb F}^j.
\end{align*}
By the Division Algorithm, there are $q(x),r(x)\in {\mathbb Q}[x]$ such that
\[ y(x)[w(x)]^{\vert m \vert} v(x) = q(x)p_A(x) + r(x)\]
where the degree of $r(x)$ is less than $2$. Because $p_A(A) = 0$ we have $y(A)[w(A)]^{\vert m\vert} v(A) = r(A)$ and $y(\lambda)[w(\lambda)]^{\vert m\vert}v(\lambda) = r(\lambda)$. If $r(\lambda) = \epsilon_{\mathbb F}$ then $r(x) = -3/10 + x/10$, and if $r(\lambda) = \epsilon_{\mathbb F}^2$, then $r(x) = 1/10 + 3x/10$. For the first we have 
\[ r(A) = \frac{-3}{10} I + \frac{1}{10} A = \begin{bmatrix} 3/2 & 1/2 \\ 13/2 & 3/2\end{bmatrix},\]
while for the second we have 
\[ r(A) = \frac{1}{10} I + \frac{3}{10} A = \begin{bmatrix} 11/2 & 3/2 \\ 39/2 & 11/2\end{bmatrix},\]
neither of which is in ${\rm GL}(2,{\mathbb Z})$. (Note that the first $r(A)$ satsifies $[r(A)]^3=A$, and that the second $r(A)$ is the square of the first.) Since all of $y(A)$, $[w(A)]^{\vert m\vert}$, and $B=v(A)$ are in ${\rm GL}(2,{\mathbb Z})$ it follow that $r(A)$ is also in ${\rm GL}(2,{\mathbb Z})$. This contradiction shows that $j=0$, so that indeed $\gamma(C(A)) = {\mathbb Z}[\lambda]^\times$.

This completely identifies $\gamma(C(A))$. Since $\gamma(A) = \lambda$ and $\gamma(-I) = -1$, we have that $C(A) = \langle A\rangle\times\langle -I\rangle$. By the Smith normal form we have that $\vert {\rm Per}^1(A)\vert = 36$ and $\vert {\rm Per}^3(A) - {\rm Per}^1(A)\vert = 46728$ both of which are bigger than $\vert {\rm Per}^1(-I)\vert =4$. Thus each $f$ in the open ${\mathcal U}_1(A)$ and each $f$ in the open ${\mathcal U}_3(A)$ has trivial centralizer.
}\end{ex}

\begin{ex}\label{another}{\rm For $n=2$ we show there exists an irreducible hyperbolic $A$ to which we cannot apply the fixed point approach nor the odd prime periodic point approach because $C(A) \ne \langle A \rangle \times \langle J\rangle$. Consider the ${\rm GL}(2,{\mathbb Z})$ matrix
\[ A = \begin{bmatrix} 2 & 5 \\ 5 & 12\end{bmatrix}\]
whose characteristic polynomial is the irreducible $p_A(x) = x^2-14x -1$. The signature $(r_1,r_2)$ of $p_A(x)$ satisfies $r_1+r_1-1=1$. None of the roots of $p_A(x)$ have modulus one, so that $A$ is hyperbolic. A root of $p_A(x)$ is $\lambda = 7+5\sqrt 2$. The real quadratic field ${\mathbb F}={\mathbb Q}(\lambda)$ has discriminant $8$. A basis for ${\mathfrak o}_{\mathbb F}$ is $1$ and $\omega = \sqrt 2$. Then ${\mathbb Z}[\lambda]\ne {\mathfrak o}_{\mathbb F}$ because $\sqrt 2\not\in {\mathbb Z}[\lambda]$ while $\sqrt 2 \in {\mathfrak o}_{\mathbb F}$. A fundamental unit is $\epsilon_{\mathbb F} = 1 + \sqrt 2$. Then ${\mathbb Z}[\lambda]^\times \ne {\mathfrak o}_{\mathbb F}^\times$ because $1+\sqrt 2\not\in {\mathbb Z}[\lambda]$.

Recognizing that $\epsilon_{\mathbb F}^3 = \lambda$ suggests that either $A$ has a cube root in ${\rm GL}(2,{\mathbb Z})$ or that $\gamma(C(A)) = {\mathbb Z}[\lambda]^\times$. Under the isomorphism $\gamma$ we have for $v(x) = (1/5)x - 2/5$ that $v(\lambda) = \epsilon_{\mathbb F}$, so that
\[ v(A) =\frac{1}{5} A - \frac{2}{5}I = \begin{bmatrix} 0 & 1 \\ 1 & 2\end{bmatrix} \in {\rm GL}(2,{\mathbb Z}),\]
which indeed satisfies $[v(A)]^3 = A$. Now $v(A)$ commutes with $A$, so that $v(A)\in C(A)$. Because $\gamma(v(A)) = \epsilon_{\mathbb F}$ and $\gamma(-I) = -1$, we have that $\gamma(C(A)) = {\mathfrak o}_{\mathbb F}^\times$, and so
\[ C(A) = \langle v(A) \rangle \times \langle -I\rangle \ne \langle A\rangle \times \langle -I\rangle.\]
For any $f\in {\mathcal U}(A)$ there is the homeomorphism $g= h_f^{-1}v(A)h_f\in H_f(A)$. Because $C(A)\ne \langle A\rangle \times \langle -I\rangle$, we cannot eliminate $g$ as a diffeomorphism by the fixed point approach nor by the odd prime periodic point approach, even though by the Smith normal form we have that both $\vert {\rm Per}^1(A)\vert = 14$ and $\vert {\rm Per}^3(A) - {\rm Per}^1(A)\vert = 2772$ are bigger than $\vert {\rm Per}^1(-I)\vert=4$. Although we know that the elements of a dense subset of ${\mathcal U}(A)\setminus\{A\}$ have trivial centralizer (by \cite{BCW1}), we do not know if there is an open subset of ${\mathcal U}(A)\setminus\{A\}$ whose elements have trivial centralizer.
}\end{ex}

\begin{ex}{\rm For $n=4$ we show that there is an irreducible hyperbolic $A$ to which we cannot apply the fixed point approach nor the odd prime periodic point approach because the order of $J$ is not a power of $2$. Consider the companion matrix
\[ A = \begin{bmatrix} 0 & 1 & 0 & 0 \\ 0 & 0 & 1 & 0 \\ 0 & 0 & 0 & 1 \\ -1 & -3 & -10 & -6\end{bmatrix}\]
of the irreducible $p(x) = x^4 + 6x^3 + 10x^2 + 3x +1$ whose signature $(r_1,r_2)$ satisfies $r_1+r_2-1=1$. None of the roots of $p_A(x) = p(x)$ have modulus one, so that $A$ is hyperbolic. For $\lambda$ a root of $p_A(x)$, the totally complex quartic field ${\mathbb F}={\mathbb Q}(\lambda)$ has discriminant $549$. A basis for ${\mathfrak o}_{\mathbb F}$ is $1$, $\lambda$, $-1+2\lambda+\lambda^2$, and $3\lambda+4\lambda^2+\lambda^3$. One verifies that ${\mathbb Z}[\lambda] = {\mathfrak o}_{\mathbb F}$ because of the change of basis
\[ \begin{bmatrix} 1 & 0 & 0 & 0 \\ 0 & 1 & 0 & 0 \\ -1 & 2 & 1 & 0 \\ 0 & 3 & 4 & 1\end{bmatrix} \begin{bmatrix} 1 \\ \lambda \\ \lambda^2 \\ \lambda^3\end{bmatrix} = \begin{bmatrix} 1 \\ \lambda \\ -1 + 2\lambda + \lambda^2 \\ 3\lambda + 4\lambda^2 + \lambda^3\end{bmatrix}\]
given by a ${\rm GL}(4,{\mathbb Z})$ matrix. A fundamental unit is $\epsilon_{\mathbb F} = \lambda$. Thus $C(A) = \langle A\rangle \times \langle J\rangle$ where $\gamma(A) = \epsilon_{\mathbb F}$ and $\gamma(J) = \zeta_{\mathbb F}$. Computation of the order of $\zeta_{\mathbb F}$ gives it as $6$. Specifically, we have
\[ J = \begin{bmatrix} 0 & -3 & -1 & 0 \\ 0 & 0 & -3 & -1 \\ 1 & 3 & 10 & 3 \\ -3 & -8 & -27 & -8\end{bmatrix}\]
which satisfies $J^3=-I$. By the Smith normal form we have that both $\vert {\rm Per}^1(A)\vert = 21$ and $\vert {\rm Per}^3(A) - {\rm Per}^1(A)\vert = 546$ are greater than $\vert {\rm Per}^1(-I)\vert=16$. But we cannot apply the fixed point approach nor the odd prime periodic point approach  because the order of $J$ is not a power of $2$. Although we do know that the elements of a dense subset of ${\mathcal U}(A)\setminus\{A\}$ have trivial centralizer (by \cite{BCW1}), we do not know if there is an open subset of ${\mathcal U}(A)\setminus\{A\}$ whose elements have trivial centralizer.
}\end{ex}


\end{document}